\numberwithin{equation}{section}
\numberwithin{figure}{section}
\newcommand\blfootnote[1]{%
  \begingroup
  \renewcommand\thefootnote{}\footnote{#1}%
  \addtocounter{footnote}{-1}%
  \endgroup
}
\newtheorem{asp}{Assumpsion}[section]
\newtheorem{thm}{Theorem}[section]
\newtheorem{lemma}{Lemma}[section]
\newtheorem{rem}{Remark}[section]
\newcommand{\commentout}[1]{{}} 
\newcommand{\abs}[1]{\left|#1\right|}
\newcommand{\vertiii}[1]{{\left\vert\kern-0.25ex\left\vert\kern-0.25ex\left\vert #1
    \right\vert\kern-0.25ex\right\vert\kern-0.25ex\right\vert}}
\begin{document}
\title{Error Analysis Of Symmetric Linear/Bilinear Partially Penalized Immersed Finite Element Methods For Helmholtz Interface Problems \\
\blfootnote{Keywords: Helmholtz type, interface problems, immersed finite element methods, error estimates}}
\author{ Ruchi Guo \thanks  {Department of Mathematics, Virginia Tech, Blacksburg, VA 24061 (ruchi91@vt.edu)}
\and
Tao Lin \thanks  {Department of Mathematics, Virginia Tech, Blacksburg, VA 24061 (tlin@vt.edu)}
\and
Yanping Lin \thanks{Department of Applied Mathematics, Hong Kong Polytechnic University, Hong Kong, China (yanping.lin@polyu.edu.hk)}
\and
Qiao Zhuang \thanks{Department of Mathematics, Virginia Tech, Blacksburg, VA 24061 (Corresponding author) (qzhuang@vt.edu)}}
\date{}
\maketitle
\begin{abstract}
This article presents an error analysis of the symmetric linear/bilinear partially penalized immersed finite element (PPIFE) methods for interface problems of Helmholtz equations. Under the assumption that the exact solution possesses a usual piecewise $H^2$ regularity, the optimal error bounds for the PPIFE solutions are derived in an energy norm and the usual $L^2$ norm provided that the mesh size is sufficiently small. A numerical example is conducted to validate the theoretical conclusions.
\end{abstract}
\section{Introduction}

This article is about the error analysis for the linear and bilinear partially penalized immersed finite element (PPIFE) methods developed in
\cite{2019LinLinZhuang} for solving  interface boundary value problems of the Helmholtz equation \cite{1984Brown, 2006KreissPetersson}
that is posed in a bounded domain $\Omega\subseteq\mathbb{R}^2$: find $u(X)$ that satisfies the Helmholtz equation and the first-order absorbing boundary condition:
\begin{subequations}\label{model}
\begin{align}
\label{inter_PDE}
 -\nabla\cdot(\beta\nabla u)-k^2u=f, \;\;\;\; & \text{in} \; \Omega^-  \cup \Omega^+, \\
 \label{bnd_cond}
 \beta \frac{\partial{u}}{\partial{\boldsymbol{n}_{\Omega}}}+ik u=g, \;\;\;\; &\text{on} \; \partial\Omega,
\end{align}
together with the jump conditions across the interface
\cite{2007KlenowNisewongerBatraBrown,1984Brown, 1988ChristiansenKrenk, 2011JensenKupermanSchmidt,2006KreissPetersson}:
\begin{align}
[u]_{\Gamma} &:=u^-|_{\Gamma} - u^+|_{\Gamma}= 0, \label{jump_cond_1} \\
\big[\beta \nabla u\cdot \mathbf{n}\big]_{\Gamma} &:= \beta^- \nabla u^-\cdot \mathbf{n}|_{\Gamma} - \beta^+ \nabla u^+\cdot \mathbf{n}|_{\Gamma} = 0, \label{jump_cond_2}
\end{align}
where the domain $\Omega\subseteq\mathbb{R}^2$ is divided by an interface curve $\Gamma$ into two subdomains $\Omega^-$ and $\Omega^+$,  with $\overline{\Omega}=\overline{\Omega^-\cup {\Omega^+}\cup\Gamma}$ ,  $u^s = u|_{\Omega^s}, s = \pm$ and $\mathbf{ n}$ is the unit normal vector to the interface $\Gamma$, $k$ is the wave number, $i=\sqrt{-1}$,  $\mathbf{n}_{\Omega}$ is the unit outward normal vector to $\partial \Omega$, and the coefficient $\beta$ is a piecewise positive constant function such that
\begin{equation*}
\beta(X)=
\left\{\begin{array}{cc}
\beta^- & \text{for} \; X\in \Omega^- ,\\
\beta^+ & \text{for} \; X\in \Omega^+.
\end{array}\right.
\end{equation*}
\end{subequations}
\commentout{
The phenomena of wave propagation in heterogeneous media appear in many aspects of science and engineering and have been widely studied, such as those investigations about wave transmission, diffraction, and scattering  ~\cite{2007KlenowNisewongerBatraBrown,1999WildeZhang, 1988ChristiansenKrenk,2000DhiaCiarletZwolf,1989Speck,2016ZhangLi},
to name just a few. Helmholtz interface problems arise in the study on time-harmonic wave propagation in heterogeneous media~\cite{ 2015Chaumont_thesis, 2016BarucpChaumontGout}. Across the interface between two different materials, the amplitude needs to satisfy the jump conditions~\cite{ 2007KlenowNisewongerBatraBrown,1984Brown, 1988ChristiansenKrenk, 2011JensenKupermanSchmidt, 2006KreissPetersson, 2017ZouWilkinsHarari} or compatibility conditions~\cite{ 2015Chaumont_thesis, 2016BarucpChaumontGout}.
The jump conditions are imposed on the basis of pertinent physical properties, for instance, the continuity of pressure and volume flow \cite{2007KlenowNisewongerBatraBrown,1988ChristiansenKrenk,2011JensenKupermanSchmidt}. With those thoughts, we consider the
following interface boundary value problem for the Helmholtz equation \cite{1984Brown, 2006KreissPetersson} which is posed in a bounded domain $\Omega\subseteq\mathbb{R}^2$: find $u(X)$ that satisfies the Helmholtz equation and the first-order absorbing boundary condition:
\commentout{
In this article, we intend to study a kind of second-order interface problem governed by Helmholtz equation. The governing equation could be derived from the wave equation with discontinuous coefficients~\cite{1984Brown, 2006KreissPetersson}
}
\begin{subequations}\label{model}
\begin{align}
\label{inter_PDE}
 -\nabla\cdot(\beta\nabla u)-k^2u=f, \;\;\;\; & \text{in} \; \Omega^-  \cup \Omega^+, \\
 \label{bnd_cond}
 \beta \frac{\partial{u}}{\partial{\boldsymbol{n}_{\Omega}}}+ik u=g, \;\;\;\; &\text{on} \; \partial\Omega,
\end{align}
together with the jump conditions across the interface
\cite{2007KlenowNisewongerBatraBrown,1984Brown, 1988ChristiansenKrenk, 2011JensenKupermanSchmidt,2006KreissPetersson}:
\begin{align}
[u]_{\Gamma} &:=u^-|_{\Gamma} - u^+|_{\Gamma}= 0, \label{jump_cond_1} \\
\big[\beta \nabla u\cdot \mathbf{n}\big]_{\Gamma} &:= \beta^- \nabla u^-\cdot \mathbf{n}|_{\Gamma} - \beta^+ \nabla u^+\cdot \mathbf{n}|_{\Gamma} = 0, \label{jump_cond_2}
\end{align}
where the domain $\Omega\subseteq\mathbb{R}^2$ is divided by an interface curve $\Gamma$ with $C^2$ smoothness into two subdomains $\Omega^-$ and $\Omega^+$,  with $\overline{\Omega}=\overline{\Omega^-\cup {\Omega^+}\cup\Gamma}$ ,  $u^s = u|_{\Omega^s}, s = \pm$ and $\mathbf{ n}$ is the unit normal vector to the interface $\Gamma$, $k$ is the wave number, $i=\sqrt{-1}$,  $\mathbf{n}_{\Omega}$ is the unit outward normal vector to $\partial \Omega$, and the coefficient $\beta$ is a piecewise positive constant function such that
\begin{equation*}
\beta(X)=
\left\{\begin{array}{cc}
\beta^- & \text{for} \; X\in \Omega^- ,\\
\beta^+ & \text{for} \; X\in \Omega^+.
\end{array}\right.
\end{equation*}
\end{subequations}
}

The Helmholtz boundary value problems without interface have been widely studied in the context of finite element methods, including classic finite element methods \cite{1995AzizWerschulz, 1997Babuska, 1995Babuska, 1994DouglasSheenSantos} and interior penalty Galerkin method~\cite{2016BurmanWuZhu, 2015DuWu, 2003Farhat, 2009FengWu, 2009GittelsonHiptmairPerugia, 2013MelenkParsaniaSauter, 2017LamShu, 2006Perugia}. These finite element methods can also be used to solve Helmholtz interface problems with
body-fitting meshes; however, their efficiency might be impeded for some applications where the locations and geometries of the interface keep changing because these conventional finite element methods require the meshes to be generated repeatedly according to the evolving interface configurations. To alleviate the burden of re-meshing, numerical methods based on interface-independent meshes have been developed, such as immersed interface methods (IIM) \cite{2006LiIto} and cut finite element methods (CutFEMs) \cite{2015BurmanClaus}. We refer the readers to ~\cite{2018Swift,2016ZhangLi,2017ZouWilkinsHarari} for applying those methods to Helmholtz interface problems.

The PPIFE methods developed in \cite{2019LinLinZhuang} are a class of finite element methods for solving
Helmholtz interface problems with interface-independent meshes. Extensive numerical experiments reported in \cite{2019LinLinZhuang} clearly demonstrate the optimal convergence of these PPIFE methods, and this observation motivates us to carry out related error analysis to theoretically confirm their optimal convergence.

The error analysis of traditional finite element methods for solving Helmholtz problems can be found in \cite{1979AzizKellogg, 1994DouglasSheenSantos, 1997Melenk}. Error analysis of traditional finite element methods for Helmholtz interface problems
can be found in recent publications \cite{2015Chaumont_thesis, 2016BarucpChaumontGout} where the stability and the optimal error bounds for the finite element solution were proved. We note that Schatz's argument~\cite{1974Schatz} is an important technique in the error analysis of
traditional finite element methods for Helmholtz problems, with or without interface.

\commentout{
For the error analysis of traditional finite element methods in solving Helmholtz problems, utilizing Schatz's argument~\cite{1974Schatz}, under the mesh constraint that $k^2h$ is sufficient small, the optimal error bounds with respect to $h$ were derived in some earlier literatures \cite{1979AzizKellogg, 1994DouglasSheenSantos, 1997Melenk}. In recent years, error analysis of traditional finite element methods for Helmholtz interface problems were also investigated with Schatz's argument, for example in \cite{2015Chaumont_thesis, 2016BarucpChaumontGout}, the authors analyzed the stability and proved the optimal error bounds for the finite element solution under the assumption that the mesh size and the error of the approximation of the wave speed are small enough based on a technique of approximation of the propagation media.
}

In this article, we also follow the framework of Schatz's argument to carry out the error analysis for the linear and bilinear symmetric
PPIFE methods developed in \cite{2019LinLinZhuang} for Helmholtz interface problems with a Robin boundary condition.
We utilize the coercivity and continuity of the bilinear form corresponding to the elliptic operator \cite{2018GuoLinZhuang,2015LinLinZhang}
to establish G{\aa}rding's inequality and continuity of the bilinear form in these PPIFE methods which are key ingredients in
Schatz's argument. We also derive a special trace inequality that is valid for IFE functions which are not $H^1$ functions in general. In particular, under suitable assumptions about the regularity of the exact solution and the mesh size, we are able to establish the optimal error bounds in both an energy norm and the standard $L^2$ norm for these PPIFE methods for solving the Helmholtz interface problems.

\commentout{
and the optimal orders of convergence were numerically observed when the mesh size is small enough, which motivates us to conduct the pertinent error analysis.

By constructing special local shape functions on the interface elements, IFE methods enable the interface to cut elements in a mesh, so they can solve the interface problems on the interface-independent meshes. We refer the readers to \cite{2016GuoLin,2009HeTHESIS, 2008HeLinLin, 2004LiLinLinRogers, 2003LiLinWu,2001LinLinRogersRyan, S.A.Sauter_R.Warnke} for some linear and bilinear  IFE methods based on
Cartesian meshes for elliptic interface problems. To alleviate the adverse impacts that might be brought by the discontinuity of IFE functions across the interface edges, partially penalized IFE (PPIFE) and discontinuous Galerkin IFE (DGIFE) methods~\cite{2017AdjeridGuoLin, 2009HeTHESIS,2014HeLinLin,2015LinLinZhang,2015LinYangZhang1,Zhang2013,2018ZhuangGuo} have been
developed for elliptic interface problems. As for PPIFE and DGIFE schemes for Helmholtz interface problems, our very recent work has addressed to them~\cite{2019LinLinZhuang}, where the optimal orders of convergence were numerically observed when the mesh size is small enough.

 For the error analysis of traditional finite elements in solving Helmholtz problems, by the framework of Schatz arguments~\cite{1974Schatz}, under the mesh constraint that $k^2h$ is sufficient small, the optimal error bounds with respect to $h$ were derived in some earlier literatures \cite{1979AzizKellogg, 1994DouglasSheenSantos, 1997Melenk}. In recent years, error analysis of traditional finite element methods for Helmholtz interface problems were also investigated with the Schatz argument, for example in \cite{2015Chaumont_thesis, 2016BarucpChaumontGout}, the authors analyzed the stability and proved the optimal error bounds for the finite element solution under the assumption that the mesh size and the error of the approximation of the wave speed are small enough based on a technique of approximation of the propagation media.

In this article, we aim to conduct error estimates of PPIFE methods for Helmholtz interface problems with Robin boundary condition. For this purpose, the main theoretical tool we develop in this article is a special trace inequality for the non-smooth functions (including IFE functions) on the whole domain. This tool enables us to estimate the errors of the PPIFE method based on the framework of Schatz argument. In particular, under suitable regularity assumption of the exact solution and the mesh constraint that $k^2h$ is sufficiently small, we establish the optimal error bounds in both the energy norm and $L^2$ norm for the Helmholtz interface problem.
}

The layout of the article is as follows: Section 2 introduces the notations and assumptions to be used in this article, and a symmetric PPIFE method
for the Helmholtz interface problem (\ref{model}) is described. In Section 3, optimal error bounds are derived for this PPIFE method in both the energy norm and $L^2$ norm. A numerical example is given in Section 4 to validate the theoretical results in Section 3.

\section{Notations and PPIFE Methods}

Without loss of generality, we assume that $\Omega\subset\mathbb{R}^2$ is a polygonal domain. We let $\mathcal{T}_h$ be a triangular or a rectangular mesh of $\Omega$ whose set of nodes and set of edges are $\mathcal{N}_h$ and $\mathcal{E}_h$, respectively. We assume the mesh $\mathcal{T}_h$ is independent of the interface $\Gamma$; hence, some elements of $\mathcal{T}_h$ will intersect with $\Gamma$ which will be called interface elements. We use $\mathcal{T}^i_h$ to denote the set of interface elements and use $\mathcal{T}^n_h$ for the set of non-interface elements. Similarly, we let $\mathcal{E}^i_h$ and $\mathcal{E}^n_h$ be the set of interface edges and the set of non-interface edges, respectively. In addition, we use
$\mathring{\mathcal{E}}_h$, $\mathring{\mathcal{E}}^i_h$ and  $\mathring{\mathcal{E}}_h^n$ for the set of interior edges, the set of interior interface edges, and the set of interior non-interface edges, respectively.

For each element $T\in\mathcal{T}_h$, we define its index set
as $\mathcal{I}_T=\{1,2,3\}$ when $T$ is triangular, but $\mathcal{I}_T=\{1,2,3,4\}$ when $T$ is rectangular.
Let $\psi_{j,T}$, $j\in \mathcal{I_T}$ be the standard linear or bilinear Lagrangian shape functions on $T\in\mathcal{T}_h$ such that
\begin{equation}
\label{stand_basis_fun}
\psi_{j,T}(A_i)=\delta_{ij}, ~~~ \forall i,j \in \mathcal{I}_T.
\end{equation}
We can use these shape functions to generate the following real polynomial spaces:
\begin{equation}
\label{usual_local_fem_sp}
\mathbb{\tilde{P}}(T)= \textrm{Span} \{ \psi_{j,T},j\in \mathcal{I}_T\} ~~~ \text{or} ~~~  \mathbb{\tilde{Q}}(T)= \textrm{Span} \{ \psi_{j,T},j\in \mathcal{I}_T\},~~\forall T\in\mathcal{T}_h,
\end{equation}
depending on whether $T$ is triangular or rectangular. The related complex polynomial spaces are:
\begin{equation}
\label{real_usual_local_fem_sp}
\mathbb{{P}}(T)= \{ v=v_1+iv_2:  v_1,v_2\in \mathbb{\tilde{P}}(T)\} ~~~ \text{or} ~~~  \mathbb{{Q}}(T)= \{ v=v_1+iv_2:  v_1,v_2\in \mathbb{\tilde{Q}}(T)\},~~\forall T\in\mathcal{T}_h.
\end{equation}
Throughout this article, without stating otherwise, the notation ``Span" represents spanning in the real number field.

Then, on every non-interface element $T \in \mathcal{T}_h^n$, the local real IFE space is the usual local real finite element space
\begin{equation}
\label{subspaces_real}
\tilde{S}_h(T)= \mathbb{\tilde{P}}(T)~~~ \text{or} ~~~ \tilde{S}_h(T)= \mathbb{\tilde{Q}}(T),~~\forall T\in\mathcal{T}_h^n,
\end{equation}
and its complex counterpart is
\begin{equation}
\label{subspaces}
{S}_h(T)= {{\mathbb{P}}}(T)~~~ \text{or} ~~~ {S}_h(T)= \mathbb{{Q}}(T),~~\forall T\in\mathcal{T}_h^n.
\end{equation}
In order to describe local IFE spaces on interface elements, we adopt the following standard assumptions on the mesh $\mathcal{T}_h$ of IFE spaces \cite{2009HeTHESIS, 2008HeLinLin, 2011HeLinLin}:
\begin{itemize}[leftmargin=30pt]
  \item [\textbf{(H1)}] The interface $\Gamma$ cannot intersect an edge of any element at more than two points unless the edge is part of $\Gamma$.
  \item [\textbf{(H2)}] If $\Gamma$ intersects the boundary of an element at two points, these intersection points must be on different edges of this element.
   \item [\textbf{(H3)}] The interface $\Gamma$ is a piecewise $C^2$ function, and for every interface element $T\in\mathcal{T}^i_h$, $\Gamma\cap T$ is $C^2$.
   \end{itemize}
\begin{figure}[H]
\centering
\begin{tabular}{c c}
\includegraphics[width=2in]{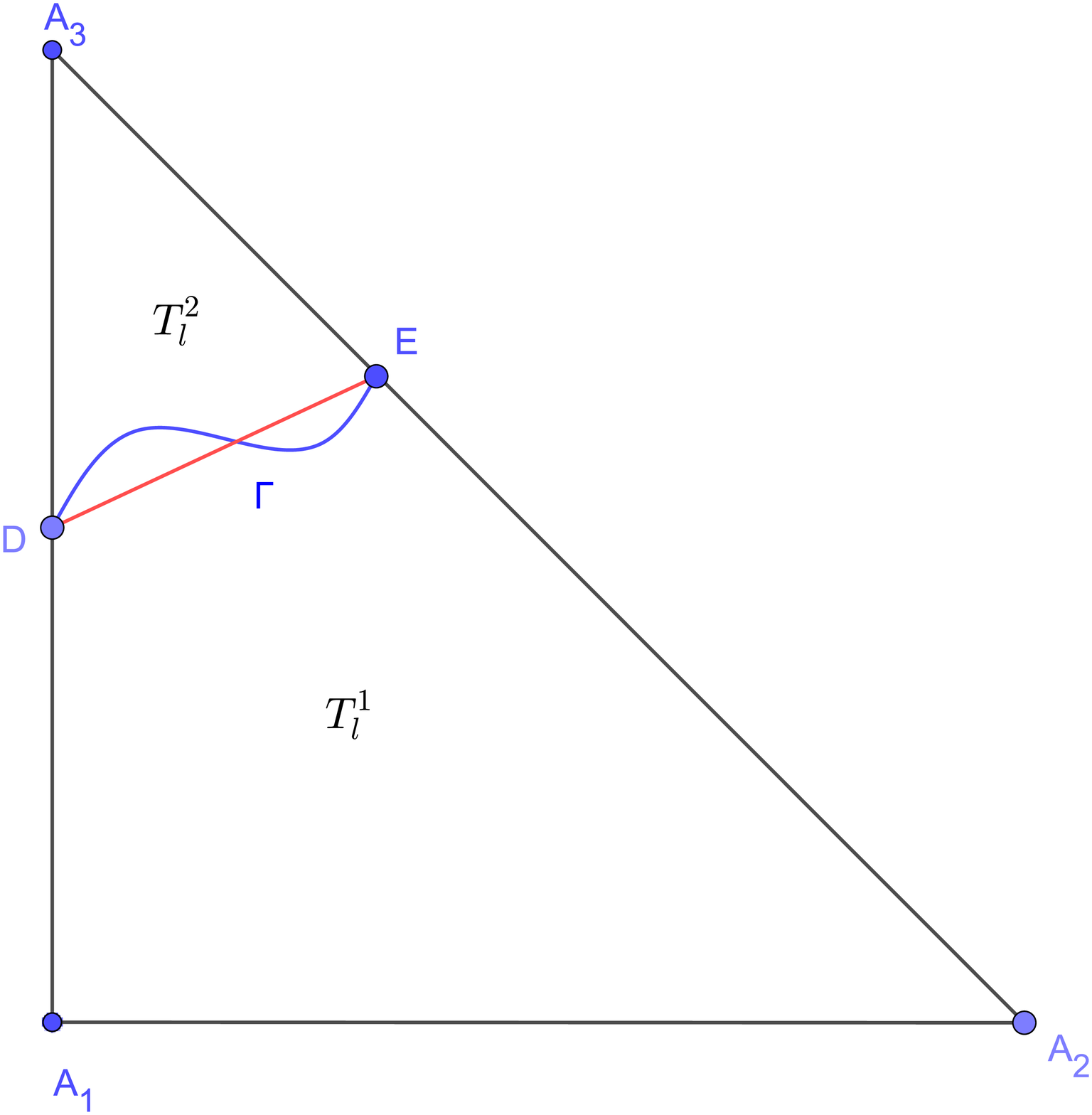}&
\includegraphics[width=2in]{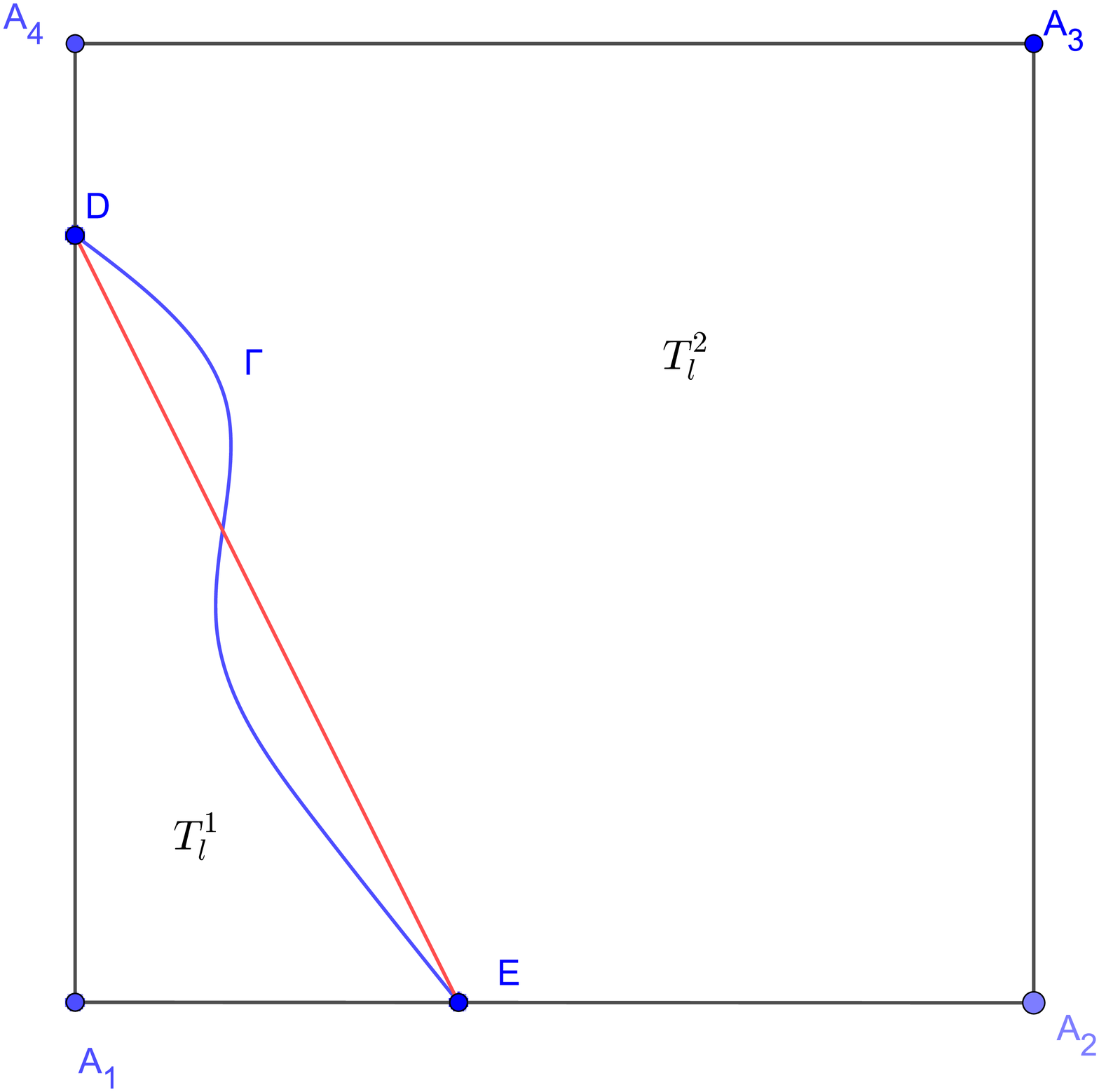}
\end{tabular}
\caption{left: triangular interface element; right: rectangular interface element.}
\label{inter_elem}
\end{figure}
Also, without loss of generality, we assume that $\Gamma \cap \partial \Omega = \emptyset$; hence, we can further assume that $\Gamma$ does not intersect with any boundary element because this assumption can be easily fulfilled when the mesh size is fine enough. To be specific, element
$T \in \mathcal{T}_h$ is a boundary element provided that $\abs{(\partial T) \cap (\partial \Omega)} \not = 0$, and we will use $\mathcal{T}_h^b$
to denote the collection of boundary elements of $\mathcal{T}_h$ from now on.

On the interface elements, we will use piecewise linear or bilinear polynomial as IFE shape functions \cite{2009HeTHESIS,2008HeLinLin,2004LiLinLinRogers}. For a typical interface element $T \in \mathcal{T}_h^i$ with vertices $A_i$, $i\in\mathcal{I}_T$, the interface partitions its index set $\mathcal{I_T}$ into $\mathcal{I}^-_T=\{ A_i: A_i\in T^- \}$ and $\mathcal{I}^+_T=\{ A_i: A_i\in T^+ \}$.
Furthermore, let $D$ and $E$ be the points where the interface $\Gamma$ intersects with the edges of $T$, as shown in Figure \ref{inter_elem}. Let $l$ be the line passing through $D$, $E$ with the normal vector $\bar{\mathbf{ n}}=(\bar{n}_x,\bar{n}_y)$. This line $l$ partitions $T$ into two subelements $T^{\pm}_l$.  A linear IFE function $\phi_{T} (x,y)$ on a triangular interface element $T$ is a piecewise linear polynomial in the following form \cite{2004LiLinLinRogers}:
\begin{equation}
\label{S_basis_fun_linear}
\phi_{T} (x,y) =
\begin{cases}
\phi_{T}^{-}(x,y)=a^-x+b^-y+c^-, ~~~ \textrm{if}~ (x,y)\in {T}^{-}_l,  \\
\phi_{T}^{+}(x,y)=a^+x+b^+y+c^+, ~~~ \textrm{if}~ (x,y)\in {T}^{+}_l,\\
\phi^-_T(D)=\phi^+_T (D), \phi^-_T (E)=\phi^+_T (E), \\
\beta^+ \frac{ \partial \phi^+_T }{\partial \bar{\mathbf{ n}} }-\beta^- \frac{ \partial \phi^-_T }{\partial \bar{\mathbf{ n}} }=0.
\end{cases}
\end{equation}
On a rectangular interface element $T$, a bilinear IFE function $\phi_{T} (x,y)$ is a piecewise bilinear polynomial in the following form \cite{2008HeLinLin}:
\begin{equation}
\label{S_basis_fun_bilinear}
\phi_{T} (x,y) =
\begin{cases}
\phi_{T}^{-}(x,y)=a^-x+b^-y+c^-+d^-xy, ~~~ \textrm{if}~ (x,y)\in T_l^{-},  \\
\phi_{T}^{+}(x,y)=a^+x+b^+y+c^++d^+xy, ~~~ \textrm{if}~ (x,y)\in T_l^{+},\\
\phi^-_T (D)=\phi^+_T (D), \phi^-_T (E)=\phi^+_T (E), d^-=d^+, \\
\int_{ \overline{DE}}(\beta^+ \frac{ \partial \phi^+_T }{\partial \bar{\mathbf{ n}} }-\beta^- \frac{ \partial \phi^-_T }{\partial \bar{\mathbf{ n}} })ds=0.
\end{cases}
\end{equation}
It has been proven \cite{2009HeTHESIS,2008HeLinLin,2004LiLinLinRogers} that IFE shape functions $\phi_{i,T}(x,y), i \in \mathcal{I}_T$ in the form of \eqref{S_basis_fun_linear} or \eqref{S_basis_fun_bilinear} can be uniquely constructed such that
\begin{equation}
\label{ife_basis_fun}
\phi_{i,T}(A_j)=\delta_{ij}, ~~~ \forall i,j \in \mathcal{I}_T.
\end{equation}
Then, we define the local real IFE space on the interface element $T \in \mathcal{T}_h^i$ as
\begin{equation}{\label{ife_shape_fun_real}}
\tilde{S}_h(T)=\mathrm {Span}\{\phi_{i,T}, i\in \mathcal{ I}_T \},~~\forall T\in \mathcal{T}_h^i,
\end{equation}
and its complex counterpart is
\begin{equation}{\label{ife_shape_fun}}
{S}_h(T)= \{ v=v_1+iv_2:  v_1,v_2\in {\tilde{S}_h}(T)\},~~\forall T\in \mathcal{T}_h^i.
\end{equation}
Using the real local IFE spaces in \eqref{subspaces_real} and \eqref{ife_shape_fun_real}, we define the global real IFE space as follows:
\begin{equation}
\begin{split}
\label{global_space_real}
\tilde{S}_{h}(\Omega)=&\left\{v\in L^2(\Omega)~:~ v|_{T}\in \tilde{S}_{h}(T),~~\forall T \in \mathcal{T}_h,  v\,\, \text{is continuous at each} \, A\in\mathcal{N}_h \right\},
\end{split}
\end{equation}
and its complex counterpart is defined with the local complex IFE spaces in \eqref{subspaces} and \eqref{ife_shape_fun}:
\begin{equation}
\begin{split}
\label{disconti_global_space}
{S}_{h}(\Omega)=&\left\{v\in L^2(\Omega)~:~ v|_{T}\in {S}_{h}(T),~~\forall T \in \mathcal{T}_h,  v\,\, \text{is continuous at each} \, A\in\mathcal{N}_h \right\}.
\end{split}
\end{equation}
In the discussion below, we will use the following function spaces. For $\tilde \Omega \subseteq \Omega$, let $H^p(\tilde \Omega), p \geq 0$ be the standard Sobolev space on $\tilde \Omega$ with the norm $\|\cdot\|_{p,\tilde{\Omega}}$ and semi-norm $|\cdot|_{p,\tilde{\Omega}}$. Furthermore, if $\tilde \Omega^s = \tilde \Omega \cap \Omega^s \not = \emptyset$, $s = \pm$, we let
\begin{equation*}
PH^2(\tilde \Omega)=
\{u: u|_{\tilde{\Omega}^s}\in H^2(\tilde \Omega^s),\; s=\pm; \; [u]=0, \; [\beta\nabla u\cdot \mathbf{n}_\Gamma]=0\; \text{on}\; \Gamma \cap \tilde \Omega  \},
\end{equation*}
equipped with the following norms and semi-norms:
\begin{equation*}
\|\cdot\|_{2,\tilde \Omega}=\sqrt{\|\cdot\|^2_{2,\tilde \Omega^-}+\|\cdot\|^2_{2,\tilde \Omega^+}}, \;\;\;\;\; |\cdot|_{2,\tilde \Omega}=\sqrt{|\cdot|^2_{2,\tilde \Omega^-}+|\cdot|^2_{2,\tilde \Omega^+}}.
\end{equation*}
\commentout{
In the description and analysis of the PPIFE methods, we will use the following space defined according to the mesh $\mathcal{T}_h$:
\begin{equation}
\begin{split}
\label{conti_sp}
V_h(\Omega) = \{ v\in L^2(\Omega):  v|_T\in H^1(T), \,\, \nabla v \cdot \boldsymbol{n}|_{\partial T}\in L^2(\partial T), \,\, v\,\, \text{is continuous across each}\, e\in \mathring{\mathcal{E}}_h^n \},
\end{split}
\end{equation}
and it can be easily verify that $S_h(\Omega)\subset V_h(\Omega)$.
}

\commentout{
For each interface element $T\in \mathcal{T}^i_h$, the interface $\Gamma$ straddles it into two subelements $T^1=\Omega^1\cap T$ and $T^2=\Omega^2\cap T$, splitting the index set $\mathcal{I}$ into two subsets: $\mathcal{I}^1=\{ i\,:\,N_i\in T^1\} $ and $\mathcal{I}^2=\{ i\,:\,N_i\in T^2\}$, with $\mathcal{I}= \mathcal{I}^1 \cup \mathcal{I}^2$. Therefore, the IFE shape functions are chosen from the following space
\begin{equation}
\label{piecewise_poly}
\mathcal{P}^p(T) = \{ q\, :\, q|_{T^1}\in\mathbb{P}^p(T^1)~~\textrm{and}~~q|_{T^2}\in\mathbb{P}^p(T^2) \},\quad p\geq 1
\end{equation}
on the triangular interface element $T$, and
\begin{equation}
\label{piecewise_poly}
\mathcal{Q}^1(T) = \{ q\, :\, q|_{T^1}\in\mathbb{Q}^1(T^1)~~\textrm{and}~~q|_{T^2}\in\mathbb{Q}^1(T^2) \},
\end{equation}
on the rectangular interface element $T$.

In $\mathcal{T}_h$, we denote the sets of interface elements and non-interface elements by $\mathcal{T}^i_h$ and ${T}^n_h$. For each element $T\in\mathcal{T}_h$, if $\mathcal{T}_h$ is a triangular mesh, in order to construct $p$-th degree local IFE space,   we introduce the index set $\mathcal{I}=\{1,2,...,\frac{(p+1)(p+2)}{2}\}$; if $\mathcal{T}_h$ is a rectangular mesh, then the index set $\mathcal{I}=\{1,2,3,4\}$. And denote $N_i\in \mathcal{N}_h$, $i \in \mathcal{I}$ as the usual local nodes on $T$, with $\mathcal{N}_h$ as the set of local nodes in all elements. Let $\psi_{j,T}$, $j\in \mathcal{I}$ be the standard $p$-th degree or bilinear finite element nodal basis function on $T$ , i.e.,
\begin{equation}
\label{stand_basis_fun}
\psi_{j,T}(N_i)=\delta_{ij}, ~~~ \forall i,j \in \mathcal{I}.
\end{equation}
Then we use the local $p$-th ($p\geq 1$) degree finite element space
\begin{equation}
\label{subspaces}
{\mathbb{P}}^p(T)= \textrm{Span} \{ \psi_{j,T},j\in \mathcal{I}. \}
\end{equation}
as local IFE space on the non-interface triangular element $T$, here $\psi_{j,T}$ is the $p$-th degree finite element nodal basis function. Also we employ the local bilinear finite element space
\begin{equation}
\label{subspaces}
{\mathbb{Q}}^1(T)= \textrm{Span} \{ \psi_{j,T},j\in \mathcal{I} \}
\end{equation}

}

Now, we present the PPIFE methods developed in \cite{2019LinLinZhuang} for the Helmholtz interface problem in order to analyze them.
The description of these PPIFE methods relies on the following space defined according to the mesh $\mathcal{T}_h$:
\begin{equation}
\begin{split}
\label{conti_sp}
V_h(\Omega) = \{ v\in L^2(\Omega):  v|_T\in H^1(T), \,\, \nabla v \cdot \boldsymbol{n}|_{\partial T}\in L^2(\partial T) ~~\forall T \in \mathcal{T}_h, \,\, [v]_e = 0
~~\forall e\in \mathring{\mathcal{E}}_h^n \},
\end{split}
\end{equation}
and it can be easily verified that $S_h(\Omega)\subset V_h(\Omega)$.
We will also employ the following standard notations for penalty terms on edges of the mesh $\mathcal{T}_h$: on each $e\in\mathring{\mathcal{E}}_h$ shared by two elements $T^e_1$ and $T^e_2$, let
\begin{equation}
\label{ed_ope}
[v]_e = v|_{T^e_1} - v|_{T^e_2}, ~~~ \text{and} ~~~ \{ v \}_e = \frac{1}{2} \left( v|_{T^e_1} + v|_{T^e_2} \right),~~\forall v\in V_h(\Omega).
\end{equation}
According to \cite{2019LinLinZhuang}, the Helmholtz interface problem (\ref{model}) has a weak form as follows:
\begin{equation}\label{weak_form_pp}
b_h(u, v) = L_f(v)+ (g,v)_{\partial \Omega},  ~~~~ \forall v \in V_h(\Omega),  \\
\end{equation}
in which the bilinear form $b_h(\cdot, \cdot)$ is such that
\begin{equation}
b_h(u, v)=a_h(u,v)+ik(u,v)_{\partial \Omega}-k^2(u, v)_\Omega, ~~\forall u, v \in V_h(\Omega), \label{eq:bilinear_form_b_h}
\end{equation}
where $(\cdot, \cdot)_{\partial \Omega}$ and $(\cdot, \cdot)_\Omega$ are the standard $L^2$ inner products of the two involved functions on $\partial \Omega$ and $\Omega$, respectively, $a_h(\cdot, \cdot): V_h(\Omega) \times V_h(\Omega) \rightarrow \mathbb{C}$
is a bilinear form defined as
\begin{equation} \label{eq:bilinear_form_pp}
\begin{split}
a_h(u,v)=& \sum_{T\in\mathcal{T}_h} \int_T \beta \nabla u\cdot \nabla \overline{v} dX
 -  \sum_{e\in\mathring{\mathcal{E}}_h^i} \int_e \{ \beta \nabla u\cdot \mathbf{ n}_e \}_e [\overline{v}]_e ds  \\
 &-\sum_{e\in\mathring{\mathcal{E}}_h^i} \int_e \{ \beta \nabla \overline{v}\cdot \mathbf{ n}_e \}_e [u]_e ds+i\sum_{e\in\mathring{\mathcal{E}}_h^i} \frac{\sigma^0_e}{|e|} \int_e [u]_e\,[\overline{v}]_e ds, \hspace{0.3in} \forall u,v\in V_h(\Omega),
\end{split}
\end{equation}
and  $L_f(\cdot): V_h (\Omega) \rightarrow \mathbb{C}$ is a linear form defined as
\begin{eqnarray} \label{eq:ppife_linear_functional}
L_f(v) = \int_{\Omega} f\overline{v}  dX=(f,v)_{\Omega}, \hspace{0.3in} \forall v\in V_h(\Omega).
\end{eqnarray}
As usual, the weak form \eqref{weak_form_pp} and the fact that $S_h(\Omega)\subset V_h(\Omega)$ lead to the symmetric PPIFE methods for the Helmholtz interface problem (\ref{model}):
find $u_h\in S_{h}(\Omega)$,  such that
\begin{equation}{\label{weak_form_ppife}}
b_h(u_h, v_h)= L_f(v_h)+ (g,v_h)_{\partial \Omega},  ~~~~ \forall v_h \in S_{h}(\Omega).
\end{equation}

\commentout{
\subsection{Linear/bilinear IFE methods}
\subsubsection{Classic linear/bilinear IFE method}
To obtain a classic IFE formulation of the interface problem, we multiply Eqn.(\ref{inter_PDE}) by a test function $v\in V_h(\Omega)$ and integrate on the domain $\Omega$, using Green's formula, it follows
\begin{equation}{\label{first_int}}
\int_{\Omega} \beta \nabla u \nabla \bar{v} dX-\int_{\partial{\Omega^1}} \beta \nabla u \cdot \boldsymbol{n_{\Omega^1}} \bar{v} ds-\int_{\partial{\Omega^2}} \beta \nabla u \cdot \boldsymbol{n_{\Omega^2}} \bar{v} ds-w^2\int_{\Omega} u \bar{v} dX=\int_{\Omega} f\bar{v}  dX,
\end{equation}
it follows
\begin{equation}{\label{first_int}}
\int_{\Omega} \beta \nabla u \nabla \bar{v} dX-\int_{\partial_{\Omega}} \beta \nabla u \cdot \boldsymbol{n_{\Omega}} \bar{v} ds-w^2\int_{\Omega} u \bar{v} dX=\int_{\Omega} f\bar{v}  dX,
\end{equation}
rearrange the terms and apply the boundary condition~(\ref{bnd_cond})
\begin{equation}{\label{classic_rough_wform}}
\sum_{T\in\mathcal{T}_h}\int_T \beta \nabla u \nabla \bar{v} dX+iw\sum_{e\in \mathcal{E}_h^b}\int_{e}u\bar{v}ds-w^2\int_{\Omega} u \bar{v} dX =\int_{\Omega} f\bar{v}  dX+\sum_{e\in \mathcal{E}_h^b}\int_{e}g\bar{v}  ds,
\end{equation}
then the weak form of the interface problem in (\ref{model}) to (\ref{jump_cond_2}): find $u\in PH^2(\Omega)$ such that
\begin{equation}
\label{weak_form}
a_h(u,v)-w^2(u, v)_\Omega = L_f(v),  ~~~~ \forall v \in V_h(\Omega),  \\
\end{equation}
where the bilinear form  $a_h:V_h(\Omega)\times V_h(\Omega)\rightarrow \mathbb{R}$ is defined such that for all $u,v\in V_h(\Omega)$,

\begin{equation}
\label{bilinear_form}
a_h(u,v)=\sum_{T\in\mathcal{T}_h} \int_T \beta \nabla u\cdot \nabla \bar{v} dX +iw  \sum_{e\in {\mathcal{E}_h^b}}\int_e u \bar{v} ds.
\end{equation}
and a linear form $L_f: V_h(\Omega)\rightarrow \mathbb{R}$ such that $v\in V_h(\Omega)$,
\begin{equation}
\label{linearform}
L_f(v) = \int_{\Omega} f \bar{v} dX + \sum_{e\in\mathcal{E}^b_h} \int_e \bar{v}g ds.
\end{equation}

Then the linear or bilinear classic IFE method for the given Helmholtz type interface problem in (\ref{model})-(\ref{jump_cond_2})  can be formulated as:  find $u_h\in S^1_{h}(\Omega)$ such that
\begin{subequations}
\label{dgife}
\begin{align}
      & a_h(u_h,v_h)-w^2(u_h, v_h)_\Omega = L_f(v_h),  ~~~~ \forall v_h \in S^1_{h}(\Omega),  \\
      & \beta\frac{\partial{u_h}}{{\partial{\bf{n}_{\Omega}}}}(X)+iw u_h (X) = g(X), ~~~~~ ~~~~~ ~\forall X \in \partial \Omega \cap \mathcal{N}_h.
\end{align}
\end{subequations}
\subsubsection{Linear/bilinear PPIFE method}
For the PPIFE formulation, multiply Eq.(\ref{inter_PDE}) by a test function $v\in V_h(\Omega)$ and integrate on each element $T\in \mathcal{T}_h$, with Green's formula, it follows:
\begin{equation}{\label{first_int}}
\int_T \beta \nabla u \nabla \bar{v} dX-\int_{\partial_T} \beta \nabla u \cdot \boldsymbol{n_T} \bar{v} ds-w^2\int_T u \bar{v} dX=\int_T f\bar{v}  dX,
\end{equation}
sum Eq.(\ref{first_int}) over all $T\in \mathcal{T}_h$, it follows:
\begin{equation}{\label{sum_over_ele}}
\sum_{T\in\mathcal{T}_h}\int_T \beta \nabla u \nabla \bar{v} dX-\sum_{e\in \mathring{\mathcal{E}_h}}\int_{e} \{\beta \nabla u \cdot \boldsymbol{n_e} \}[\bar{v}] ds-\sum_{e\in \mathcal{E}_h^b}\int_{e} \beta \nabla u \cdot \boldsymbol{n_e} \bar{v} ds-w^2\int_{\Omega} u \bar{v} dX=\int_T f\bar{v}  dX,
\end{equation}
note that the fact
\begin{equation}
\{ \beta \nabla u \cdot\boldsymbol{n_e}\}_e=(\beta \nabla u\cdot \boldsymbol{n_e})|_e,~~~~\forall e\in \mathring{\mathcal{E}}_h^n,
\end{equation}
it follows
\begin{equation}
\sum_{T\in\mathcal{T}_h}\int_T \beta \nabla u \nabla \bar{v} dX-\sum_{e\in \mathring{\mathcal{E}_h^i}}\int_{e} \{\beta \nabla u \cdot \boldsymbol{n_e} \}[\bar{v}] ds-\sum_{e\in \mathcal{E}_h^b}\int_{e} \beta \nabla u \cdot \boldsymbol{n_e} \bar{v} ds-w^2\int_{\Omega} u \bar{v} dX=\int_T f\bar{v}  dX,
\end{equation}
rearrange the terms and utilize the boundary condition (\ref{bnd_cond})
\begin{equation}{\label{rough_pp_wform}}
\sum_{T\in\mathcal{T}_h}\int_T \beta \nabla u \nabla \bar{v} dX-\sum_{e\in \mathring{\mathcal{E}_h^i}}\int_{e} \{\beta \nabla u \cdot \boldsymbol{n_e} \}[\bar{v}] ds+iw\sum_{e\in \mathcal{E}_h^b}\int_{e}u\bar{v}ds-w^2\int_{\Omega} u \bar{v} dX =\int_{\Omega} f\bar{v}  dX+\sum_{e\in \mathcal{E}_h^b}\int_{e}g\bar{v}  ds,
\end{equation}
note the regularity of $u$, for any parameter $\epsilon$, and $\sigma_e^0\geq 0$, we have\cite{2015LinLinZhang}
\begin{equation}{\label{reg_pp_cancel}}
\epsilon \sum_{e\in\mathring{\mathcal{E}}_h^i} \int_e \{ \beta \nabla \bar{v}\cdot \mathbf{ n}_e \}_e [u]_e ds=0,\quad i\sum_{e\in\mathring{\mathcal{E}}_h^i} \frac{\sigma^0_e}{|e|} \int_e [u]_e\,[\bar{v}]_e ds=0,
\end{equation}
where $i=\sqrt{-1}$, add the two terms in (\ref{reg_pp_cancel}) to (\ref{rough_pp_wform}), it forms the weak form of the considered interface problem:
find $u\in PH^2(\Omega)$ such that
\begin{equation}
\label{weak_form_pp}
a_h^{PP}(u,v)-w^2(u, v)_\Omega = L_f(v),  ~~~~ \forall v \in V_h(\Omega),  \\
\end{equation}
where the bilinear form  $a_h^{PP}:V_h(\Omega)\times V_h(\Omega)\rightarrow \mathbb{R}$ is defined such that for all $u,v\in V_h(\Omega)$,
\begin{equation}
\begin{split}
a_h^{PP}(u,v)=& \sum_{T\in\mathcal{T}_h} \int_T \beta \nabla u\cdot \nabla \bar{ v} dX \\
 - & \sum_{e\in\mathring{\mathcal{E}}_h^i} \int_e \{ \beta \nabla u\cdot \mathbf{ n}_e \}_e [\bar{v}]_e ds  + \epsilon \sum_{e\in\mathring{\mathcal{E}}_h^i} \int_e \{ \beta \nabla \bar{v}\cdot \mathbf{ n}_e \}_e [u]_e ds\\
 +&iw  \sum_{e\in {\mathcal{E}_h^b}}\int_e u \bar{v} ds+i\sum_{e\in\mathring{\mathcal{E}}_h^i} \frac{\sigma^0_e}{|e|} \int_e [u]_e\,[\bar{v}]_e ds,
\end{split}
\end{equation}
and $L_f$ is the same as it is defined in (\ref{linearform}).
Then the linear or bilinear PPIFE method for the Helmholtz interface problem considered is formulated as:  find $u_h\in S^1_{h}(\Omega)$ such that
\begin{subequations}
\label{dgife}
\begin{align}
      & a_h^{PP}(u_h,v_h)-w^2(u_h, v_h)_\Omega = L_f(v_h),  ~~~~ \forall v_h \in S^1_{h}(\Omega),  \\
      & \beta\frac{\partial{u_h}}{{\partial{\bf{n}_{\Omega}}}}(X)+iw u_h (X) = g(X), ~~~~~ ~~~~~ ~~~ \forall X \in \partial \Omega \cap \mathcal{N}_h.
\end{align}
\end{subequations}
\subsubsection{Linear/bilinear DGIFE method}
The derivation of DG formulation is very similar to that of partial penalized formulation, the weak form of the interface problem (\ref{model})-(\ref{jump_cond_2}) can be described as: find $u\in PH^2(\Omega)$ such that
\begin{equation}
\label{weak_form}
a_h^{DG}(u,v)-w^2(u, v)_\Omega = L_f(v),  ~~~~ \forall v \in DV_h(\Omega),  \\
\end{equation}
where the bilinear form  $a_h^ {DG}:DV_h(\Omega)\times DV_h(\Omega)\rightarrow \mathbb{R}$ is defined such that for all $u,v\in DV_h(\Omega)$,
\begin{equation}
\begin{split}
a_h^{DG}(u,v)=& \sum_{T\in\mathcal{T}_h} \int_T \beta \nabla u\cdot \nabla \bar{v} dX \\
 - & \sum_{e\in\mathring{\mathcal{E}}_h} \int_e \{ \beta \nabla u\cdot \mathbf{ n}_e \}_e [\bar{v}]_e ds  + \epsilon \sum_{e\in\mathring{\mathcal{E}}_h} \int_e \{ \beta \nabla \bar{v}\cdot \mathbf{ n}_e \}_e [u]_e ds\\
 +&iw  \sum_{e\in {\mathcal{E}_h^b}}\int_e u \bar{v} ds+i\sum_{e\in\mathring{\mathcal{E}}_h} \frac{\sigma^0_e}{|e|} \int_e [u]_e\,[\bar{v}]_e ds.
\end{split}
\end{equation}
and the linear form $L_f: DV_h(\Omega)\rightarrow \mathbb{R}$ such that for $v\in DV_h(\Omega)$, there holds (\ref{linearform}).
Thus the linear or bilinear DGIFE formulation is:  find $u_h\in DS^1_{h}(\Omega)$ such that
\begin{subequations}
\label{dgife}
\begin{align}
      & a_h^{DG}(u_h,v_h)-w^2(u_h, v_h)_\Omega = L_f(v_h),  ~~~~ \forall v_h \in DS^1_{h}(\Omega),  \\
      & \beta\frac{\partial{u_h}}{{\partial{\bf{n}_{\Omega}}}}(X)+iw u_h (X) = g(X), ~~~~~ ~~~~~ ~~ \forall X \in \partial \Omega \cap \mathcal{N}_h.
\end{align}
\end{subequations}
\subsection{Higher degree IFE methods}
\subsubsection{Higher degree PPIFE methods}
To derive a PPIFE scheme to be used in the higher order IFE spaces, multiply Eq.(\ref{inter_PDE}) by a test function $v\in W_h(\Omega)$ and integrate on each element $T\in \mathcal{T}_h$, with Green's formula, it follows:
\begin{equation}{\label{first_int_hd}}
\int_T \beta \nabla u \nabla \bar{v} dX-\int_{\partial_T} \beta \nabla u \cdot \boldsymbol{n_T} \bar{v} ds-\int_{\Gamma_T} \{\beta \nabla u \cdot \boldsymbol{n_\Gamma}\} [\bar{v}] ds-w^2\int_T u \bar{v} dX=\int_T f\bar{v}  dX,
\end{equation}
sum Eq.(\ref{first_int_hd}) over all $T\in \mathcal{T}_h$, it follows:
\begin{equation}{\label{sum_over_ele}}
\sum_{T\in\mathcal{T}_h}\int_T \beta \nabla u \nabla \bar{v} dX-\sum_{e\in \mathring{\mathcal{E}_h}}\int_{e} \{\beta \nabla u \cdot \boldsymbol{n_e} \}[\bar{v}] ds-\sum_{e\in \mathcal{E}_h^b}\int_{e} \beta \nabla u \cdot \boldsymbol{n_e} \bar{v} ds-\int_{\Gamma}\{ \beta \nabla u\cdot \boldsymbol{n_\Gamma}\} [\bar{v}] ds-w^2\int_{\Omega} u \bar{v} dX=\int_T f\bar{v}  dX,
\end{equation}
note that the fact
\begin{equation}
\{ \beta \nabla u \cdot\boldsymbol{n_e}\}_e=(\beta \nabla u\cdot \boldsymbol{n_e})|_e,~~~~\forall e\in \mathring{\mathcal{E}}_h^n,
\end{equation}
it follows
\begin{equation}
\sum_{T\in\mathcal{T}_h}\int_T \beta \nabla u \nabla \bar{v} dX-\sum_{e\in \mathring{\mathcal{E}_h^i}}\int_{e} \{\beta \nabla u \cdot \boldsymbol{n_e} \}[\bar{v}] ds-\sum_{e\in \mathcal{E}_h^b}\int_{e} \beta \nabla u \cdot \boldsymbol{n_e} \bar{v} ds-\int_{\Gamma} \{\beta \nabla u \cdot \boldsymbol{n_{\Gamma}}\} [\bar{v}] ds-w^2\int_{\Omega} u \bar{v} dX=\int_T f\bar{v}  dX,
\end{equation}
rearrange the terms and utilize the boundary condition (\ref{bnd_cond})
\begin{equation}{\label{rough_pp_wform_hd}}
\begin{split}
\sum_{T\in\mathcal{T}_h}\int_T \beta \nabla u \nabla \bar{v} dX-\sum_{e\in \mathring{\mathcal{E}_h^i}}\int_{e} \{\beta \nabla u \cdot \boldsymbol{n_e} \}[\bar{v}] ds+iw\sum_{e\in \mathcal{E}_h^b}\int_{e}u\bar{v}ds-\int_{\Gamma} \{\beta \nabla u \cdot \boldsymbol{n_\Gamma}\} [\bar{v}] ds-w^2\int_{\Omega} u \bar{v} dX\\
=\int_{\Omega} f\bar{v}  dX+\sum_{e\in \mathcal{E}_h^b}\int_{e}g\bar{v}  ds,
\end{split}
\end{equation}
note the regularity of $u$, for any parameter $\epsilon$, and $\sigma_e^0\geq 0$, we have\cite{2015LinLinZhang}
\begin{equation}{\label{reg_pp_cancel_hd}}
\begin{split}
\epsilon \sum_{e\in\mathring{\mathcal{E}}_h^i} \int_e \{ \beta \nabla \bar{v}\cdot \mathbf{ n}_e \}_e [u]_e ds=0,\quad i\sum_{e\in\mathring{\mathcal{E}}_h^i} \frac{\sigma^0_e}{|e|} \int_e [u]_e\,[\bar{v}]_e ds=0, \\
 \epsilon \int_{\Gamma} \{ \beta \nabla \bar{v}\cdot \mathbf{ n}_\Gamma \}_\Gamma [u]_\Gamma ds=0, \quad i\frac{\sigma^0_e}{|e|} \int_\Gamma [u]_\Gamma\,[\bar{v}]_\Gamma ds=0,
\end{split}
\end{equation}
add the four terms in (\ref{reg_pp_cancel_hd}) to (\ref{rough_pp_wform_hd}), it forms the weak form of the considered interface problem (\ref{model})-(\ref{ext_jump_cond_1}):
find $u\in PH^2(\Omega)$ such that
\begin{equation}
\label{weak_form_pp}
a_h^{PP}(u,v)-w^2(u, v)_\Omega = L_f(v),  ~~~~ \forall v \in W_h(\Omega),  \\
\end{equation}
where the bilinear form  $a_h^{PP}:W_h(\Omega)\times W_h(\Omega)\rightarrow \mathbb{R}$ is defined such that for all $u,v\in W_h(\Omega)$,
\begin{equation}
\begin{split}
a_h^{PP}(u,v)=& \sum_{T\in\mathcal{T}_h} \int_T \beta \nabla u\cdot \nabla \bar{ v} dX \\
 - & \sum_{e\in\mathring{\mathcal{E}}_h^i} \int_e \{ \beta \nabla u\cdot \mathbf{ n}_e \}_e [\bar{v}]_e ds  + \epsilon \sum_{e\in\mathring{\mathcal{E}}_h^i} \int_e \{ \beta \nabla \bar{v}\cdot \mathbf{ n}_e \}_e [u]_e ds\\
 +&iw  \sum_{e\in {\mathcal{E}_h^b}}\int_e u \bar{v} ds+i\sum_{e\in\mathring{\mathcal{E}}_h^i} \frac{\sigma^0_e}{|e|} \int_e [u]_e\,[\bar{v}]_e ds-\int_{\Gamma} \{\beta \nabla u \cdot \boldsymbol{n_\Gamma}\} [\bar{v}] ds+ \\
& \epsilon \int_{\Gamma} \{ \beta \nabla \bar{v}\cdot \mathbf{ n}_\Gamma \}_\Gamma [u]_\Gamma ds+i\frac{\sigma^0_e}{|e|} \int_\Gamma [u]_\Gamma\,[\bar{v}]_\Gamma ds,
\end{split}
\end{equation}
and the linear form $L_f: W_h(\Omega)\rightarrow \mathbb{R}$ is defined to such that for $v\in W_h(\Omega)$, there holds (\ref{linearform}).
Then the higher degree PPIFE method for the Helmholtz interface problem (\ref{model}) to (\ref{ext_jump_cond_1}) can be formulated as: find $u_h\in S_{h}^p(\Omega)$, $p\geq 2$ such that
\begin{align}
      & a_h^{PP}(u_h,v_h)-w^2(u_h, v_h)_\Omega = L_f(v_h),  ~~~~ \forall v_h \in S^p_{h}(\Omega),  \\
      & \beta\frac{\partial{u_h}}{{\partial{\bf{n}_{\Omega}}}}(X)+iw u_h (X) = g(X), ~~~~~ ~~~~~ ~~~ \forall X \in \partial \Omega \cap \mathcal{N}_h.
\end{align}
}


\section{Error Analysis of Symmetric PPIFE Methods}

The error estimation to be presented for the symmetric PPIFE methods  described by \eqref{weak_form_ppife} will use the following three energy norms and the broken $H^1$ norm for functions $v\in V_h(\Omega)$:
\begin{equation}
\Vert v\Vert_h^2=\sum_{T\in\mathcal{T}_h} \int_T \beta \Vert \nabla v \Vert^2 dX+\sum_{e\in \mathring{\mathcal{E}_h^i}}{\sigma_e^0}\int_e \Vert\vert e \vert^{-1/2} [v]\Vert^2 ds,
\end{equation}

\begin{equation}
\vertiii{v}_h^2=\Vert v \Vert_h^2+\sum_{e\in \mathring{\mathcal{E}_h^i}}({\sigma_e^0})^{-1}\int_e\Vert \vert e \vert^{1/2}\{\beta \nabla v\cdot \boldsymbol{n_e}\}\Vert^2ds,
\end{equation}

\begin{equation}
\vertiii{v}_{\mathcal{H}}^2=\vertiii{v}_h^2+ k^2 \Vert v \Vert_{L^2(\Omega)}^2,
\end{equation}

\begin{equation}
\Vert v\Vert_{1,\Omega}^2=\sum_{T\in\mathcal{T}_h} \Vert v \Vert_{1,T}^2,~~\vert v\vert_{1,\Omega}^2=\sum_{T\in\mathcal{T}_h} \vert v \vert_{1,T}^2.
\end{equation}
First we make the following assumption on the regularity of the exact solution
\begin{asp} {\label{regularity_assumption}}
 Assume that the exact solution $u$ to the interface problem (\ref{model}) is in $PH^2(\Omega)$ and the following estimate holds for some constant $C$:
 \begin{equation}\label{aspeq}
\Vert u \Vert_{2,\Omega}\leq C (k+k^{-1}) (\Vert f \Vert_{L^2(\Omega)}+\Vert g \Vert_{L^2(\partial \Omega)}).
\end{equation}
\end{asp}
Assumption \ref{regularity_assumption} can be satisfied when the $\partial \Omega$ and $\Gamma$ are sufficiently smooth,
see \cite{2009FengWu, 1997Melenk} and \cite{2017MoiolaSpence} for more details.
Next we recall a standard estimate for the trace of a $H^1$ function on $\partial \Omega$ in the following lemma.
\begin{lemma}{\label{std_bd_trace}}
Assume that $v\in H^1(\Omega)$, then there exists a constant $C$ such that
\begin{equation}
\Vert v \Vert_{L^2(\partial\Omega)}^2 \leq C\Vert v \Vert_{L^2(\Omega)}(\Vert v \Vert_{L^2(\Omega)}+\vert v \vert_{1,\Omega}).
\end{equation}
\end{lemma}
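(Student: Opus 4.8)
The plan is to establish this as the standard multiplicative (or "rough") trace inequality, proved by a divergence‑theorem argument with a suitable multiplier field. First I would record that, since $\Omega$ is a bounded polygonal (hence Lipschitz) domain, there exist a vector field $\boldsymbol{\Phi}\in W^{1,\infty}(\Omega)^2$ and a constant $c_0>0$, both depending only on $\Omega$, such that $\boldsymbol{\Phi}\cdot\boldsymbol{n}_\Omega\ge c_0$ almost everywhere on $\partial\Omega$. For a polygon such a $\boldsymbol{\Phi}$ can be built explicitly by gluing the constant fields $\boldsymbol{n}_\Omega$ of the individual edges with a smooth partition of unity, but one may equally cite the general construction available for Lipschitz domains.

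Next, for $v\in C^\infty(\overline{\Omega})$ I would apply the divergence theorem to the field $|v|^2\boldsymbol{\Phi}$, which gives
\[
\int_{\partial\Omega}|v|^2\,\boldsymbol{\Phi}\cdot\boldsymbol{n}_\Omega\,ds=\int_\Omega |v|^2\,(\nabla\cdot\boldsymbol{\Phi})\,dX+\int_\Omega \boldsymbol{\Phi}\cdot\nabla\!\left(|v|^2\right)dX .
\]
Using $\nabla(|v|^2)=2\,\mathrm{Re}(\overline{v}\,\nabla v)$ so that $|\nabla(|v|^2)|\le 2|v|\,|\nabla v|$, then bounding the left side below by $c_0\|v\|_{L^2(\partial\Omega)}^2$ and the right side above via Hölder's and the Cauchy--Schwarz inequalities, I obtain
\[
c_0\,\|v\|_{L^2(\partial\Omega)}^2\le \|\nabla\cdot\boldsymbol{\Phi}\|_{L^\infty(\Omega)}\,\|v\|_{L^2(\Omega)}^2+2\|\boldsymbol{\Phi}\|_{L^\infty(\Omega)}\,\|v\|_{L^2(\Omega)}\,|v|_{1,\Omega}.
\]
Dividing by $c_0$ and factoring $\|v\|_{L^2(\Omega)}$ out of the right-hand side yields the asserted bound with $C=c_0^{-1}\max\{\|\nabla\cdot\boldsymbol{\Phi}\|_{L^\infty(\Omega)},\,2\|\boldsymbol{\Phi}\|_{L^\infty(\Omega)}\}$, a constant depending only on $\Omega$.

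Finally I would pass from $C^\infty(\overline{\Omega})$ to general $v\in H^1(\Omega)$ by density (valid on Lipschitz domains) together with the continuity of the trace operator $H^1(\Omega)\to L^2(\partial\Omega)$, observing that both sides of the inequality are continuous with respect to the $H^1(\Omega)$ norm. I do not expect a genuine obstacle here; the only point needing mild care is the construction of the multiplier field $\boldsymbol{\Phi}$ near the corners of $\partial\Omega$, and since only a constant depending on the fixed polygon $\Omega$ is required, this is routine and can be dispatched by a standard reference. (Alternatively, one could localize with a finite atlas of charts, flatten the boundary by bi-Lipschitz maps, and reduce to the elementary one-dimensional identity $|v(x',0)|^2=-\int_0^\infty \partial_{x_n}|v(x',x_n)|^2\,dx_n$ on the half-space, obtaining the same estimate.)
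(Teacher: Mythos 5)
Your argument is correct, but it is not what the paper does: the paper simply cites this multiplicative trace inequality as equation (4.37) of Larson and Bengzon's textbook, offering no proof of its own. What you supply is the standard self-contained derivation behind such a citation: choose a multiplier field $\boldsymbol{\Phi}\in W^{1,\infty}(\Omega)^2$ with $\boldsymbol{\Phi}\cdot\boldsymbol{n}_\Omega\ge c_0>0$ on $\partial\Omega$ (for a star-shaped polygon one may even take $\boldsymbol{\Phi}(X)=X-X_0$ with $X_0$ an interior point, which avoids the partition-of-unity gluing altogether), apply the divergence theorem to $|v|^2\boldsymbol{\Phi}$, estimate $|\nabla(|v|^2)|\le 2|v|\,|\nabla v|$, and use Cauchy--Schwarz; then pass to general $v\in H^1(\Omega)$ by density and continuity of the trace map on a Lipschitz domain. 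All of these steps are sound, and your explicit constant $C=c_0^{-1}\max\{\Vert\nabla\cdot\boldsymbol{\Phi}\Vert_{L^\infty(\Omega)},\,2\Vert\boldsymbol{\Phi}\Vert_{L^\infty(\Omega)}\}$ depends only on $\Omega$, exactly as required. The only difference in substance is one of economy versus transparency: the paper's one-line citation is all that is needed for the error analysis, while your proof makes the dependence of the constant on the domain explicit and shows directly why the estimate is multiplicative (an $L^2$ factor times an $H^1$-type factor), which is the feature later exploited in Theorem 3.1 and Lemma 3.4. Note also that the paper uses the lemma for complex-valued functions, and your computation $\nabla(|v|^2)=2\,\mathrm{Re}(\overline{v}\,\nabla v)$ already covers that case, so no adjustment is needed.
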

\begin{proof}
This result is given in (4.37) in \cite{2013LarsonBengzon}.
\end{proof}

For each function $v\in PH^2(\Omega)\oplus S_h(\Omega)$, we let $J_hv$ be its interpolation in the
standard continuous (i.e., $H^1$) linear or bilinear finite element space defined on the same mesh $\mathcal{T}_h$ such that
\begin{equation}
\label{standard_interp}
J_hv|_T = J_{h, T}v, \hspace{0.1in} \text{with} \hspace{0.1in}
J_{h, T}v(X) = \sum_{i \in \mathcal{I}_{T}} v(A_i)\psi_{i, T}(X), ~~\forall X \in T,~~\forall T \in \mathcal{T}_h.
\end{equation}
Upper bounds of $J_hv$ are given in the following lemma.
\begin{lemma}{\label{est_interpolation_bnd}}
There exists a constant $C$ such that the following hold for
all $v\in PH^2(\Omega)\oplus S_h(\Omega)$:
\begin{align}
\Vert J_h v\Vert_{L^2(\Omega)} &\leq  \Vert v \Vert_{L^2(\Omega)}+Ch\vert v \vert_{1,\Omega}, \label{est_interpolation_bnd_1}\\
\vert J_hv\vert_{1,\Omega} &\leq C\vert v \vert_{1,\Omega}. \label{est_interpolation_bnd_2}
\end{align}
\end{lemma}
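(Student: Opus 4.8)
The plan is to reduce both inequalities to element-local estimates and then recombine them. Since $J_hv$ is defined elementwise by \eqref{standard_interp} through the nodal values $v(A_i)$, I first record that $v|_T$ is continuous on every $T\in\mathcal T_h$: its $PH^2(\Omega)$-component is continuous across $\Gamma$ because of the jump condition $[\,\cdot\,]_{\Gamma}=0$ built into $PH^2(\Omega)$, and its $S_h(\Omega)$-component is continuous by the construction of the IFE shape functions in \eqref{S_basis_fun_linear}--\eqref{S_basis_fun_bilinear} together with \eqref{ife_basis_fun}; hence the nodal values, and therefore $J_{h,T}v$, are well defined. Because $\|J_hv\|_{L^2(\Omega)}^2=\sum_{T}\|J_{h,T}v\|_{L^2(T)}^2$ and $|J_hv|_{1,\Omega}^2=\sum_T|J_{h,T}v|_{1,T}^2$, it suffices to prove
\[
\|J_{h,T}v\|_{L^2(T)}\le \|v\|_{L^2(T)}+Ch\,|v|_{1,T},\qquad |J_{h,T}v|_{1,T}\le C\,|v|_{1,T},
\]
with $C$ independent of $h$, of $T$, and of how $\Gamma$ cuts $T$, and then to sum over $T$, using Minkowski's inequality for the first bound. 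Every local estimate will be obtained by scaling $T$ to a reference element, on which $|v|_{m,T}\simeq h^{m-1}|\hat v|_{m,\hat T}$.

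On a non-interface element $v|_T\in H^2(T)$ and $J_{h,T}$ reproduces $\tilde{\mathbb{P}}(T)$ (resp. $\tilde{\mathbb{Q}}(T)$), so the two local bounds follow from the boundedness and first-order approximation of the Lagrange interpolant after the usual subtraction of the mean value of $v$ (which $J_{h,T}$ preserves) and a Bramble--Hilbert/Poincar\'e argument on the reference element. The substance of the lemma lies in the interface elements $T\in\mathcal T_h^i$, where $v|_T$ is in general only piecewise $H^2$. There I would use the linearity of $J_{h,T}$ to split $v|_T=\eta+w$ into its $PH^2$-component $\eta$ (kinked only across $\Gamma\cap T$) and its IFE-component $w$ (kinked only across the chord $l$ through the interface-edge intersection points), and treat the two separately. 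For $w$, a piecewise linear/bilinear polynomial, its nodal values and nodal-value differences are controlled by $\|w\|_{L^2(T)}$ and $|w|_{1,T}$ via the inverse-type inequalities for IFE functions that hold uniformly in the interface position; since $w|_T=\sum_i w(A_i)\phi_{i,T}$ and $J_{h,T}w=\sum_i w(A_i)\psi_{i,T}$ carry the same nodal values, the difference $\sum_i w(A_i)(\psi_{i,T}-\phi_{i,T})$ is then bounded with the uniform estimates $\|\psi_{i,T}-\phi_{i,T}\|_{L^2(T)}\le Ch$ and $|\psi_{i,T}-\phi_{i,T}|_{1,T}\le C$ --- all of these facts being supplied by \cite{2009HeTHESIS,2008HeLinLin,2004LiLinLinRogers}. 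For $\eta$, one combines the embedding $H^2\hookrightarrow C^0$ on the two subdomains $T\cap\Omega^\pm$ (whose geometry, after scaling, is uniformly controlled by (H3)) with the reproduction of affine/bilinear functions by $J_{h,T}$ to estimate $\|\eta-J_{h,T}\eta\|_{L^2(T)}$ and $|J_{h,T}\eta|_{1,T}$ in the required form.

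The main obstacle will be the interface elements, for two linked reasons. First, $v|_T\notin H^2(T)$ and its two constituents are piecewise with respect to two different curves ($\Gamma\cap T$ for $\eta$ and the chord $l$ for $w$), so no single standard interpolation estimate applies, and the splitting --- together with separate control of the nodal values of $\eta$ and of $w$ --- must be arranged with care. Second, and more delicate, $\Gamma$ may meet $T$ in an arbitrarily anisotropic way, so the subdomains $T\cap\Omega^\pm$ and $T_l^\pm$ can have area $\ll h^2$; consequently the Sobolev, trace and inverse inequalities applied on them must be the versions whose constants are robust with respect to the cut. Supplying these cut-robust bounds is precisely the function of the IFE machinery in \cite{2009HeTHESIS,2008HeLinLin,2011HeLinLin} --- uniform estimates for the IFE shape functions and the IFE interpolation operator, independent of the location of $\Gamma$ --- so the proof amounts to invoking those correctly; once they are in hand, recombining the local estimates over $T$ by Minkowski is routine.
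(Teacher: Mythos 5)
Your strategy has two genuine gaps, both tied to the decision to split $v|_T=\eta+w$ into a $PH^2$ component and an IFE component and to estimate the two pieces separately. First, the decomposition of an element of $PH^2(\Omega)\oplus S_h(\Omega)$ is neither unique (the two spaces overlap) nor norm-stable, and you never supply a stability estimate. Even if you establish the local bounds for $\eta$ and for $w$ individually, recombining them only yields
$\Vert J_{h,T}v\Vert_{L^2(T)}\le \Vert \eta\Vert_{L^2(T)}+\Vert w\Vert_{L^2(T)}+Ch\big(\vert \eta\vert_{1,T}+\vert w\vert_{1,T}\big)$,
and there is no bound of $\Vert \eta\Vert+\Vert w\Vert$ by $\Vert v\Vert$ (the two components can be large with cancellation, and any such constant could degenerate with $h$ and the interface location); without it the right-hand side cannot be converted into $\Vert v\Vert_{L^2(T)}+Ch\vert v\vert_{1,T}$, so \eqref{est_interpolation_bnd_1}--\eqref{est_interpolation_bnd_2} do not follow. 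Second, your treatment of $\eta$ via the embedding $H^2\hookrightarrow C^0$ on $T\cap\Omega^\pm$ controls the nodal values by the full piecewise $H^2$ norm, so the resulting estimates inevitably carry $\vert \eta\vert_{2}$-terms, whereas the lemma asserts bounds in terms of $\Vert v\Vert_{L^2(\Omega)}$ and $\vert v\vert_{1,\Omega}$ only; you do not explain how the second-derivative contributions disappear. The same objection applies to your sketch on non-interface elements: subtracting the mean and invoking "boundedness of the Lagrange interpolant" is not enough, because nodal values are not controlled by $\Vert v\Vert_{L^2(T)}+h\vert v\vert_{1,T}$ alone.

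The paper's proof avoids both issues by never splitting $v$ and never passing through Sobolev embeddings or shape-function comparisons. It treats every element (interface or not) with one identity: since $v|_T$ is continuous and piecewise smooth along the segments joining $X$ to the vertices, $v(A_i)=v(X)+\int_0^1\nabla v\big(tA_i+(1-t)X\big)\cdot(A_i-X)\,dt$, and the partition of unity $\sum_{i\in\mathcal{I}_T}\psi_{i,T}\equiv 1$ turns the definition \eqref{standard_interp} into $J_{h,T}v=v+\sum_{i}\big(\int_0^1\nabla v(Y_i(t,X))\cdot(A_i-X)\,dt\big)\psi_{i,T}$. The two estimates then follow directly from $\Vert\psi_{i,T}\Vert_{L^\infty(T)}\le C$, $\Vert\nabla\psi_{i,T}\Vert_{L^\infty(T)}\le Ch^{-1}$ and $\vert A_i-X\vert\le Ch$ (for the gradient bound the $v(X)$ term drops out because $\sum_i\nabla\psi_{i,T}\equiv 0$), and summation over $T\in\mathcal{T}_h$. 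If you keep your elementwise reduction, you should replace the splitting-plus-embedding machinery, and the reliance on uniform IFE shape-function estimates, by this Taylor/partition-of-unity argument; the cut-robust IFE estimates you cite are simply not needed for this lemma.
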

\begin{proof}
Let $v \in PH^2(\Omega)\oplus S_h(\Omega)$. Then $v|_T \in H^2(T)$ on $T\in\mathcal{T}_h^n$ and $v|_T \in PH^2(T)\oplus S_h(T)$ on $T\in\mathcal{T}_h^i$.
Denote $Y_i(t,X)=tA_i+(1-t)X$, $t\in [0,1]$. By the first order Taylor expansion, we have
 \begin{equation}
 v(A_i)=v(X)+\int_0^1 \nabla v (Y_i(t,X)) \cdot (A_i-X) dt. \label{eq:est_interpolation_bnd_1}
 \end{equation}
Using \eqref{eq:est_interpolation_bnd_1} and the partition of unity of the linear and bilinear finite element
shape functions on $T \in \mathcal{T}_h$, we have
\begin{equation}
 J_hv(X) = J_{h, T}v(X) = v(X)+\sum_{i\in\mathcal{I}_T}\left(\int_0^1 \nabla v (Y_i(t,X)) \cdot (A_i-X)dt\right)\psi_i(X),~~
 \forall X \in T. \label{eq:est_interpolation_bnd_2}
 \end{equation}
Since there exists a constant $C$ such that $\Vert\psi_i\Vert_{L^{\infty}(T)}\leq C$ and $\Vert A_i-X\Vert\leq Ch$,
from \eqref{eq:est_interpolation_bnd_2}, we have
\begin{equation}{\label{est_jh_l2}}
\begin{split}
\Vert J_hv\Vert_{L^2(T)}\leq &\Vert v \Vert_{L^2(T)}+C \Big (\int_T \big(\sum_{i\in\mathcal{I}_T}\int_0^1 \nabla v (Y_i(t,X)) \cdot (A_i-X)dt\big)^2dX\Big)^{1/2}\\
\leq&\Vert v \Vert_{L^2(T)}+Ch\int_0^1\big(\sum_{i\in\mathcal{I}_T}\int_T\Vert \nabla v (Y_i(t,X))\Vert^2 dX\big)^{\frac{1}{2}}dt\\
\leq &\Vert v \Vert_{L^2(T)}+Ch\vert v \vert_{1,T}.
\end{split}
\end{equation}
Similarly, by $\Vert \nabla \psi_{i,T}\Vert_{L^\infty(T)}\leq Ch^{-1}$, we have
\begin{equation}{\label{est_jh_h1}}
\begin{split}
\Vert \nabla J_hv\Vert_{L^2(T)}=&\Vert\sum_{i\in \mathcal{I}_T} v(A_i) \nabla \psi_{i,T}(X)\Vert_{L^2(T)}\\
=& \left(\int_T(\sum_{i\in\mathcal{I}_T}\int_0^1 \nabla v(Y_i(t,X))(A_i-X)dt\nabla \psi_{i,T}(X))^2dX\right)^{1/2}  \\
\leq & Ch^{-1} Ch\vert v \vert_{1,T}\\
\leq &C\vert v \vert_{1,T}.
\end{split}
\end{equation}
Then, summing \eqref{est_jh_l2} and \eqref{est_jh_h1} over all elements $T\in\mathcal{T}_h$ leads to estimates \eqref{est_interpolation_bnd_1} and
\eqref{est_interpolation_bnd_2}, respectively.
\end{proof}
Since the IFE space $S_h(\Omega)$ is not a subspace of $H^1(\Omega)$ in general \cite{2008HeLinLin,2004LiLinLinRogers}, the trace inequality
cannot be applied to
functions in $PH^2(\Omega)\oplus S_h(\Omega)$, for which, nevertheless, we can derive a similar trace inequality as follows.
\begin{thm}{\label{bound_trace_ineq}}
There exists a constant $C$ such that for every $v \in PH^2(\Omega)\oplus S_h(\Omega)$ the following inequality holds:
\begin{equation}
\Vert v\Vert_{L^2(\partial \Omega)}^2\leq C \big(\Vert v \Vert_{L^2(\Omega)}+h\vert v \vert_{1,\Omega}\big)\Vert v\Vert_{1,\Omega}. \label{eq:bound_trace_ineq}
\end{equation}
\end{thm}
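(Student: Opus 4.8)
The plan is to split $v$ into its continuous Lagrange interpolant $J_hv\in H^1(\Omega)$ and the remainder $v-J_hv$, apply the classical trace inequality of Lemma~\ref{std_bd_trace} to the former, and control the boundary trace of the latter element by element, exploiting the standing assumption that no boundary element is an interface element. First I would note that point values $v(A_i)$ are well defined (since $PH^2(T)\oplus S_h(T)\subset C(\overline T)$ in two dimensions), so $J_hv$ as in \eqref{standard_interp} makes sense, and that $J_hv\in H^1(\Omega)$ because the continuous linear/bilinear interpolant agrees with nodal values on each shared edge. Then $\Vert v\Vert_{L^2(\partial\Omega)}^2\le 2\Vert J_hv\Vert_{L^2(\partial\Omega)}^2+2\Vert v-J_hv\Vert_{L^2(\partial\Omega)}^2$.

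For the first term, Lemma~\ref{std_bd_trace} applied to $J_hv\in H^1(\Omega)$ gives $\Vert J_hv\Vert_{L^2(\partial\Omega)}^2\le C\Vert J_hv\Vert_{L^2(\Omega)}\big(\Vert J_hv\Vert_{L^2(\Omega)}+\vert J_hv\vert_{1,\Omega}\big)$. By Lemma~\ref{est_interpolation_bnd}, $\Vert J_hv\Vert_{L^2(\Omega)}\le\Vert v\Vert_{L^2(\Omega)}+Ch\vert v\vert_{1,\Omega}$ and $\vert J_hv\vert_{1,\Omega}\le C\vert v\vert_{1,\Omega}$; using $\Vert v\Vert_{L^2(\Omega)}+\vert v\vert_{1,\Omega}\le\sqrt2\,\Vert v\Vert_{1,\Omega}$ and (for $h$ bounded) absorbing the lower-order $h\vert v\vert_{1,\Omega}$ in the second factor, this yields $\Vert J_hv\Vert_{L^2(\partial\Omega)}^2\le C\big(\Vert v\Vert_{L^2(\Omega)}+h\vert v\vert_{1,\Omega}\big)\Vert v\Vert_{1,\Omega}$, which is already of the desired form.

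For the remainder, $\partial\Omega$ is covered by the boundary elements $\mathcal{T}_h^b$, each of which is a non-interface element; hence on every $T\in\mathcal{T}_h^b$ we have $v|_T\in H^2(T)$ and $(v-J_hv)|_T\in H^1(T)$, so the scaled multiplicative trace inequality applies: $\Vert v-J_hv\Vert_{L^2(\partial T)}^2\le C\big(h^{-1}\Vert v-J_hv\Vert_{L^2(T)}^2+h\vert v-J_hv\vert_{1,T}^2\big)$. The local bounds $\Vert v-J_hv\Vert_{L^2(T)}\le Ch\vert v\vert_{1,T}$ and $\vert v-J_hv\vert_{1,T}\le C\vert v\vert_{1,T}$ follow directly from the Taylor identity \eqref{eq:est_interpolation_bnd_2} and the gradient estimate already derived inside the proof of Lemma~\ref{est_interpolation_bnd} (combined with $\vert v-J_hv\vert_{1,T}\le\vert v\vert_{1,T}+\vert J_hv\vert_{1,T}$). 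Substituting gives $\Vert v-J_hv\Vert_{L^2(\partial T)}^2\le Ch\vert v\vert_{1,T}^2$, and summing over $T\in\mathcal{T}_h^b$ produces $\Vert v-J_hv\Vert_{L^2(\partial\Omega)}^2\le Ch\vert v\vert_{1,\Omega}^2\le C\big(\Vert v\Vert_{L^2(\Omega)}+h\vert v\vert_{1,\Omega}\big)\Vert v\Vert_{1,\Omega}$, using $h\vert v\vert_{1,\Omega}^2\le h\vert v\vert_{1,\Omega}\Vert v\Vert_{1,\Omega}$. Adding the two contributions gives \eqref{eq:bound_trace_ineq}.

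The main obstacle is precisely the reason this is stated as a separate theorem rather than an application of Lemma~\ref{std_bd_trace}: $v$ is in general discontinuous across interface edges, so $v\notin H^1(\Omega)$ and no global trace inequality is available. The decomposition pushes all the non-conformity into $v-J_hv$, and the structural fact that rescues the estimate near $\partial\Omega$ is the standing assumption $\Gamma\cap\partial\Omega=\emptyset$ refined to ``$\Gamma$ meets no boundary element,'' which guarantees that the remainder is handled by ordinary conforming interpolation estimates on all boundary elements. The only delicate points to verify carefully are the uniformity of the constants in the scaled local trace inequality (standard under shape regularity of $\mathcal{T}_h$) and that the elementwise versions of the bounds in Lemma~\ref{est_interpolation_bnd} are indeed what its proof delivers.
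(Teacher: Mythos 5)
Your proposal is correct and follows essentially the same route as the paper's own proof: the same splitting $v=(v-J_hv)+J_hv$, the same use of Lemma~\ref{std_bd_trace} together with Lemma~\ref{est_interpolation_bnd} for the conforming part, and the same elementwise scaled trace inequality plus interpolation estimates on the (interface-free) boundary elements for the remainder. The only difference is cosmetic: you spell out explicitly the final absorption $h\vert v\vert_{1,\Omega}^2\le\big(\Vert v\Vert_{L^2(\Omega)}+h\vert v\vert_{1,\Omega}\big)\Vert v\Vert_{1,\Omega}$, which the paper leaves implicit.
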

\begin{proof}
Let $v$ be a function in $PH^2(\Omega)\oplus S_h(\Omega)$ and let $J_hv$ be its standard finite element interpolation described by \eqref{standard_interp}, and we have
\begin{equation}{\label{eq_bci_1}}
\Vert v \Vert_{L^2({\partial\Omega})}^2\leq 2 \left(\Vert v-J_hv\Vert_{L^2(\partial \Omega)}^2+\Vert J_hv\Vert_{L^2(\partial \Omega)}^2\right) .
\end{equation}
We estimate the second term on the right hand side of \eqref{eq_bci_1} first. Since $J_h v$ is in $H^1(\Omega)$, by Lemma \ref{std_bd_trace} and Lemma \ref{est_interpolation_bnd}, we have
\begin{equation}{\label{eq_bci_2}}
\begin{split}
\Vert J_hv\Vert_{L^2(\partial \Omega)}^2\leq& C\Vert J_hv\Vert_{L^2(\Omega)}(\Vert J_hv\Vert_{L^2(\Omega)}+\vert J_hv\vert_{1,\Omega})\\
\leq & C(\Vert v \Vert_{L^2(\Omega)}+Ch\vert v\vert_{1,\Omega})(\Vert v \Vert_{L^2(\Omega)}+Ch\vert v \vert_{1,\Omega}+\vert v \vert_{1,\Omega})\\
\leq & C(\Vert v \Vert_{L^2(\Omega)}+h\vert v \vert_{1,\Omega})\Vert v \Vert_{1,\Omega}.
\end{split}
\end{equation}
For the first term on the right hand side of \eqref{eq_bci_1}, we note that $v\in H^2(T)$ on $T\in \mathcal{T}_h^b$ because of the assumption that the interface $\Gamma$ does not touch boundary elements when $h$ is small enough. Then, using the standard trace inequality on $T\in \mathcal{T}_h^b$ and the approximation capability of finite element space, we have
\begin{equation}{\label{eq_bci_3}}
\begin{split}
\Vert v-J_hv\Vert_{L^2(\partial \Omega)}^2\leq &\sum_{T\in\mathcal{T}_h^b} \Vert v-J_hv\Vert_{L^2(\partial T)}^2\\
\leq& Ch^{-1}\sum_{T\in\mathcal{T}_h^b} (\Vert v-J_hv\Vert_{L^2(T)}^2+h^2\Vert \nabla (v-J_hv)\Vert_{L^2(T)}^2)\\
\leq& Ch^{-1} \sum_{T\in\mathcal{T}_h^b} (Ch^2 | v |_{1,T}^2+h^2\cdot C| v |_{1,T})^2\\
\leq & Ch \sum_{T\in\mathcal{T}_h^b} | v |_{1,T}^2\\
\leq & Ch | v |_{1,\Omega}^2.
\end{split}
\end{equation}
Finally, the inequality \eqref{eq:bound_trace_ineq} follows from applying \eqref{eq_bci_2} and \eqref{eq_bci_3} to \eqref{eq_bci_1}.
\end{proof}

For each function $u\in PH^2(\Omega)$, we recall that its interpolation in the IFE space $S_h(\Omega)$ is as \cite{2009HeTHESIS,2008HeLinLin,2004LiLinLinRogers}
\begin{eqnarray}
\label{ife_interpolation}
I_hu|_T = I_{h, T}u, \hspace{0.1in} \text{with} \hspace{0.1in}
\begin{cases}
      & I_{h, T}u(X) = \sum_{i \in \mathcal{I}_T} u(A_i)\phi_{i, T}(X), ~~\forall X \in T,~~\forall T \in \mathcal{T}^i_h , \\
      &  I_{h, T}u(X) = \sum_{i \in \mathcal{I}_T} u(A_i)\psi_{i, T}(X), ~~\forall X \in T,~~\forall T \in \mathcal{T}^n_h.
\end{cases}
\end{eqnarray}
The following theorem provides a description about the approximation capability of the IFE spaces in terms of the energy norm $\vertiii{.}_{\mathcal{H}}$
\begin{thm}{\label{interp_error}}
There exists a constant $C$ such that the following estimate holds for every $u\in PH^2(\Omega)$:
\begin{equation}{\label{inter_error_energy_2}}
\vertiii{I_hu-u}_{\mathcal{H}}\leq Ch\Vert u \Vert_{2,\Omega}, ~~\forall u\in PH^2(\Omega),
\end{equation}
provided that $k h\leq C_0$ for some constant $C_0$.
\end{thm}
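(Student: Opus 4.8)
The plan is to bound separately the four groups of terms in
\[
\vertiii{I_hu-u}_{\mathcal{H}}^2=\sum_{T\in\mathcal{T}_h}\int_T\beta\Vert\nabla(I_hu-u)\Vert^2\,dX
+\sum_{e\in\mathring{\mathcal{E}}_h^i}\sigma_e^0\,|e|^{-1}\Vert[I_hu-u]_e\Vert_{L^2(e)}^2
+\sum_{e\in\mathring{\mathcal{E}}_h^i}(\sigma_e^0)^{-1}|e|\,\Vert\{\beta\nabla(I_hu-u)\cdot\boldsymbol{n_e}\}_e\Vert_{L^2(e)}^2
+k^2\Vert I_hu-u\Vert_{L^2(\Omega)}^2,
\]
writing $w:=I_hu-u$ throughout. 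Since $I_hu=I_h(\mathrm{Re}\,u)+iI_h(\mathrm{Im}\,u)$ and each norm above splits as the sum of squares of the real and imaginary parts, it suffices to argue componentwise, so we may take $u$ real. The starting point is the classical local IFE interpolation estimate \cite{2004LiLinLinRogers,2008HeLinLin,2009HeTHESIS}: under \textbf{(H1)}--\textbf{(H3)} there is a constant $C$ with
\[
\Vert w\Vert_{L^2(T)}+h\,|w|_{1,T}\le Ch^2\Vert u\Vert_{2,T},\qquad\forall\,T\in\mathcal{T}_h,\ \forall\,u\in PH^2(\Omega),
\]
the right-hand norms being the piecewise (along $\Gamma$) ones on interface elements. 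Summing over $T$ controls the volume term, $\sum_T\int_T\beta\Vert\nabla w\Vert^2\,dX\le C|w|_{1,\Omega}^2\le Ch^2\Vert u\Vert_{2,\Omega}^2$, and, using the hypothesis $kh\le C_0$, the Helmholtz term, $k^2\Vert w\Vert_{L^2(\Omega)}^2\le Ck^2h^4\Vert u\Vert_{2,\Omega}^2=C(kh)^2h^2\Vert u\Vert_{2,\Omega}^2\le CC_0^2\,h^2\Vert u\Vert_{2,\Omega}^2$.

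For the two families of edge terms I will reduce to trace inequalities on the two elements $T_1^e,T_2^e$ sharing an interface edge $e$. Because $u\in PH^2(\Omega)$ satisfies the interface conditions \eqref{jump_cond_1}--\eqref{jump_cond_2}, it lies in $H^1(\Omega)$, so $[u]_e=0$ and hence $[w]_e=[I_hu]_e$, while $\{\beta\nabla w\cdot\boldsymbol{n_e}\}_e$ is bounded by a constant times the traces of $\nabla w|_{T_j^e}$ on $e$; thus it is enough to control $\Vert w\Vert_{L^2(e)}$ and $\Vert\nabla w\Vert_{L^2(e)}$ from each side. When $T_j^e$ is a non-interface element, $w|_{T_j^e}\in H^2(T_j^e)$, and the classical elementwise trace inequality together with the local estimate above (and $|I_hu|_{2,T_j^e}\le C|u|_{2,T_j^e}$, trivial for linear and an easy bilinear-interpolation estimate otherwise) gives $|e|^{-1}\Vert w\Vert_{L^2(e)}^2\le Ch^2\Vert u\Vert_{2,T_j^e}^2$ and $|e|\,\Vert\nabla w\Vert_{L^2(e)}^2\le Ch^2\Vert u\Vert_{2,T_j^e}^2$. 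When $T_j^e$ is an interface element, $w$ is no longer $H^1$ on $T_j^e$; one then works on the sub-polygons of $T_j^e$ cut out by $\Gamma$ and by the approximating line $l$ — on each of which $w$ is $H^1$, indeed $H^2$ off $l$ — applies the trace inequality there, and invokes the corresponding IFE interpolation bounds on the sub-pieces together with the fact, guaranteed by \textbf{(H3)}, that the strip between $\Gamma$ and $l$ has width $O(h^2)$; this yields the same $O(h^2)$ bounds. Alternatively, since $\Vert\cdot\Vert_h$ and $\vertiii{\cdot}_h$ do not involve $k$, one may simply quote the analogous interpolation estimate $\vertiii{I_hu-u}_h\le Ch\Vert u\Vert_{2,\Omega}$ proved for elliptic interface problems in \cite{2015LinLinZhang} and combine it with the $L^2$ bound above.

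Summing the per-edge estimates over $e\in\mathring{\mathcal{E}}_h^i$, and using shape regularity so that each element lies in only boundedly many edge patches, gives
\[
\sum_{e\in\mathring{\mathcal{E}}_h^i}\sigma_e^0\,|e|^{-1}\Vert[w]_e\Vert_{L^2(e)}^2
+\sum_{e\in\mathring{\mathcal{E}}_h^i}(\sigma_e^0)^{-1}|e|\,\Vert\{\beta\nabla w\cdot\boldsymbol{n_e}\}_e\Vert_{L^2(e)}^2\le Ch^2\Vert u\Vert_{2,\Omega}^2,
\]
so $\vertiii{w}_h^2\le Ch^2\Vert u\Vert_{2,\Omega}^2$. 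Adding the Helmholtz term and taking square roots yields $\vertiii{I_hu-u}_{\mathcal{H}}\le Ch\Vert u\Vert_{2,\Omega}$ with $C$ independent of $h$ and $k$, as claimed. I expect the only genuinely delicate point to be the interface-edge trace estimates for $w=I_hu-u$: $I_hu$ has a discontinuous gradient across the line $l$, and the singular supports of $I_hu$ and of $u$ sit on the two \emph{different} curves $l$ and $\Gamma$, so the usual $H^1$ trace inequality is unavailable and must be replaced by the sub-element argument (or borrowed from the existing IFE literature); everything else is bookkeeping, and the hypothesis $kh\le C_0$ enters only to fold the factor $k^2$ in the Helmholtz part of the norm into the $O(h^2)$ approximation rate.
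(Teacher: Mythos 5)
Your proposal is correct, and in its fallback form it is essentially the paper's proof. The paper's own argument is a three-line citation: it quotes the known IFE interpolation results (Theorem 3.14 in \cite{2008HeLinLin}, Theorem 3.7 in \cite{2004LiLinLinRogers}, Theorem 4.2 in \cite{2018GuoLinZhuang}) to get $\vertiii{I_hu-u}_h\le Ch\Vert u\Vert_{2,\Omega}$ and $\Vert I_hu-u\Vert_{L^2(\Omega)}\le Ch^2\Vert u\Vert_{2,\Omega}$, and then absorbs the wave-number term exactly as you do, $k^2\Vert I_hu-u\Vert^2_{L^2(\Omega)}\le C(kh)^2h^2\Vert u\Vert^2_{2,\Omega}\le Ch^2\Vert u\Vert_{2,\Omega}^2$ under $kh\le C_0$; this is precisely your ``alternatively, quote the elliptic-IFE energy-norm interpolation estimate and add the $L^2$ bound'' route. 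Your primary route, which re-derives the two families of interface-edge terms from scratch, goes beyond what the paper does, and its delicate step is exactly the one you flag: on an interface element the standard scaled trace inequality does not apply to the thin sub-regions between $\Gamma$ and the line $l$ (their shape regularity degenerates, so the trace constant is not uniformly controlled), and the literature handles this instead with special trace inequalities for IFE functions (which are polynomials on sub-elements) and Sobolev extensions of $u^{\pm}$ rather than traces on the $O(h^2)$ strip. As written, that sub-element sketch would need this machinery to be made rigorous, but since you explicitly identify the difficulty and supply the citation fallback that the paper itself relies on, the proposal as a whole is sound; what the self-contained route would buy is independence from the cited interpolation theorems, at the cost of reproducing a nontrivial chunk of the elliptic IFE analysis that the paper deliberately outsources.
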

\begin{proof} By Theorem 3.14 in \cite{2008HeLinLin}, Theorem 3.7 in \cite{2004LiLinLinRogers}, and Theorem 4.2 in \cite{2018GuoLinZhuang},  it follows
\begin{equation*}
\begin{split} 
\interleave{I_hu-u}\interleave_{\mathcal{H}}^2&=\interleave{I_hu-u}\interleave_{{h}}^2+k^2\Vert I_hu-u\Vert_{L^2(\Omega)},\\
&\leq Ch^2\Vert u \Vert_{2,\Omega}^2+Ck^2 h^4\Vert u \Vert_{2,\Omega}^2 \leq Ch^2\Vert u \Vert_{2,\Omega}^2,
\end{split}
\end{equation*}
which proves \eqref{inter_error_energy_2}.
\end{proof}

We now proceed to the error estimation for the symmetric PPIFE methods described by \eqref{weak_form_ppife}, and we will follow
Schatz's argument \cite{1974Schatz}. We start from the G{\aa}rding's inequality for $b_h(.,.)$ in the following lemma.

\begin{lemma}{\label{pseudo_coercivity}}
There exist constants $C_1$ and $C_2$ such that
the following inequality holds for $\sigma_e^0$ sufficiently large
\begin{equation}{\label{coercivity}}
\vert b_h(v,v) \vert \geq C_1 \interleave v \interleave_\mathcal{H}^2-C_2k^2\Vert v \Vert_{L^2(\Omega)}, \quad \forall v\in S_h(\Omega).
\end{equation}
\end{lemma}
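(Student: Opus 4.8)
The plan is to split $b_h(v,v)=\operatorname{Re}b_h(v,v)+i\operatorname{Im}b_h(v,v)$ and to use the fact that the two parts, when added, reassemble the coercive elliptic PPIFE form. From \eqref{eq:bilinear_form_pp}--\eqref{eq:bilinear_form_b_h}, the Galerkin volume term is real, the two consistency terms form a pair $z+\overline z$, and the penalty carries the factor $i$, so
\begin{equation*}
\operatorname{Re}b_h(v,v)=\sum_{T\in\mathcal{T}_h}\int_T\beta\Vert\nabla v\Vert^2\,dX-2\operatorname{Re}\sum_{e\in\mathring{\mathcal{E}}_h^i}\int_e\{\beta\nabla v\cdot\mathbf{n}_e\}_e[\overline v]_e\,ds-k^2\Vert v\Vert_{L^2(\Omega)}^2,
\end{equation*}
\begin{equation*}
\operatorname{Im}b_h(v,v)=\sum_{e\in\mathring{\mathcal{E}}_h^i}\frac{\sigma_e^0}{|e|}\int_e|[v]_e|^2\,ds+k\Vert v\Vert_{L^2(\partial\Omega)}^2 .
\end{equation*}

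Adding these, the volume term, the consistency pair and the penalty reassemble into $\widetilde a_h(v,v)$, where $\widetilde a_h$ is the standard symmetric partially penalized IFE bilinear form for $-\nabla\cdot(\beta\nabla\cdot)$ (the one with a real, positive penalty), so that
\begin{equation*}
\operatorname{Re}b_h(v,v)+\operatorname{Im}b_h(v,v)=\widetilde a_h(v,v)+k\Vert v\Vert_{L^2(\partial\Omega)}^2-k^2\Vert v\Vert_{L^2(\Omega)}^2 .
\end{equation*}
I would then invoke the coercivity of $\widetilde a_h$ established in \cite{2015LinLinZhang,2018GuoLinZhuang}: for $\sigma_e^0$ large enough there is $\kappa>0$ with $\widetilde a_h(w,w)\ge\kappa\Vert w\Vert_h^2$ for all real IFE functions $w$, hence also for complex $v\in S_h(\Omega)$ after applying it to $\operatorname{Re}v$ and $\operatorname{Im}v$ separately. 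Dropping the nonnegative boundary term gives $\operatorname{Re}b_h(v,v)+\operatorname{Im}b_h(v,v)\ge\kappa\Vert v\Vert_h^2-k^2\Vert v\Vert_{L^2(\Omega)}^2$.

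To pass from $\Vert\cdot\Vert_h$ to $\vertiii{\cdot}_{\mathcal H}$, I would note that on $S_h(\Omega)$ the extra term of $\vertiii{\cdot}_h$, namely $\sum_{e}(\sigma_e^0)^{-1}|e|\,\Vert\{\beta\nabla v\cdot\mathbf{n}_e\}\Vert_{L^2(e)}^2$, is bounded by $C\sigma_0^{-1}\sum_T\int_T\beta\Vert\nabla v\Vert^2\le C\sigma_0^{-1}\Vert v\Vert_h^2$ (with $\sigma_0=\min_e\sigma_e^0$) by the trace/inverse inequality for IFE functions on interface edges together with $|e|\simeq h$; hence $\Vert v\Vert_h^2\ge c\,\vertiii{v}_h^2=c\big(\vertiii{v}_{\mathcal H}^2-k^2\Vert v\Vert_{L^2(\Omega)}^2\big)$. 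Finally, $|z|\ge\operatorname{Re}z$ and $|z|\ge\operatorname{Im}z$ for every $z\in\mathbb{C}$ give $|b_h(v,v)|\ge\tfrac12\big(\operatorname{Re}b_h(v,v)+\operatorname{Im}b_h(v,v)\big)$; combining this with the previous two estimates yields \eqref{coercivity}, for instance with $C_1=\kappa c/2$ and $C_2=(\kappa c+1)/2$.

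The point to get right — more a structural observation than a hard obstacle — is the recombination step: since the penalty sits in the imaginary part, $\operatorname{Re}b_h(v,v)$ by itself carries no penalty seminorm and its consistency terms are sign indefinite, so the energy norm is controlled only after $\operatorname{Re}$ and $\operatorname{Im}$ are added, and this is precisely where the hypothesis ``$\sigma_e^0$ sufficiently large'' is consumed, through the coercivity of $\widetilde a_h$. The only IFE-specific technicality is the trace/inverse estimate for $\{\beta\nabla v\cdot\mathbf{n}_e\}$ used in the norm-equivalence step, needed because IFE functions are not globally $H^1$ and their gradients jump across interface edges; it is available from the elliptic IFE literature cited above. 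No regularity of the exact solution enters this lemma.
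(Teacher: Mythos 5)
Your proof is correct and follows essentially the same route as the paper's: bound $\vert b_h(v,v)\vert$ from below by a positive multiple of $\mathrm{Re}\,b_h(v,v)+\mathrm{Im}\,b_h(v,v)$, observe that the volume, consistency, and penalty pieces recombine into the coercive symmetric elliptic PPIFE form plus the nonnegative boundary term minus $k^2\Vert v\Vert_{L^2(\Omega)}^2$, and consume the hypothesis ``$\sigma_e^0$ sufficiently large'' through the elliptic coercivity of \cite{2018GuoLinZhuang}. The only cosmetic difference is that the paper applies that coercivity separately to $v_1=\mathrm{Re}\,v$ and $v_2=\mathrm{Im}\,v$ and cites it directly in the $\vertiii{\cdot}_h$ norm, so your additional (valid) norm-equivalence step between $\Vert\cdot\Vert_h$ and $\vertiii{\cdot}_h$ via the IFE trace inequality is not needed there.
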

\begin{proof}
First of all, we note that
\begin{equation}
\begin{split}
\vert b_h(v,v)\vert &\geq \frac{1}{\sqrt{2}} \big({\rm{Re}}(b_h(v,v))+{\rm{Im}}(b_h(v,v))\big)\\
&=\frac{1}{\sqrt{2}} \left({\rm{Re}}\big(a_h(v,v)\big)+{\rm{Im}}\big(a_h(v,v)\big) + k\Vert v \Vert_{L^2(\partial \Omega)}^2-k^2\Vert v \Vert_{L^2(\Omega)}^2 \right).
\end{split} \label{eq:pseudo_coercivity_1}
\end{equation}
Next, we introduce the bilinear form ${\tilde{a}_h(.,.)}$ : $V_h(\Omega)\times V_h(\Omega)\rightarrow\mathbb{C}$ such that
\begin{equation*}
a_h(u,v)={\tilde{a}_h(u,v)}+i\sum_{e\in\mathring{\mathcal{E}}_h^i} \frac{\sigma^0_e}{|e|} \int_e [u]_e\,[\overline{v}]_e ds,~~\forall v\in V_h(\Omega).
\end{equation*}
For each $v\in S_h(\Omega)$, we let $v=v_1+iv_2$ with $v_1=\mathrm{Re}(v) \in \tilde{S}_h(\Omega)$ and $v_2=\mathrm{Im}(v) \in \tilde{S}_h(\Omega)$. Since ${{a}_h(.,.)}$ and ${\tilde{a}_h(.,.)}$ are both bilinear and symmetric, we have
\begin{equation*}
a_h(v,v)=a_h(v_1,v_1)+a_h(v_2,v_2).
\end{equation*}
It follows that
\begin{equation}
\mathrm{Re}\big(a_h(v,v)\big)+\mathrm{Im}\big(a_h(v,v)\big) = {\tilde{a}_h(v_1,v_1)}+{\tilde{a}_h(v_2,v_2)}+\sum_{e\in\mathring{\mathcal{E}}_h^i} \frac{\sigma^0_e}{|e|} \int_e [v_1]_e\,[\overline{v_1}]_e ds + \sum_{e\in\mathring{\mathcal{E}}_h^i}\frac{\sigma^0_e}{|e|} \int_e [v_2]_e\,[\overline{v_2}]_e ds. \label{eq:pseudo_coercivity_2}
\end{equation}
Because $v_1,v_2\in \tilde{S}_h(\Omega)$, we can apply Theorem 4.3 in \cite{2018GuoLinZhuang} to \eqref{eq:pseudo_coercivity_2} so that
there exists a constant $\kappa>0$ such that
\begin{equation}
\mathrm{Re}\big(a_h(v,v)\big) + \mathrm{Im}\big(a_h(v,v)) \geq \kappa (\vertiii{ v_1}_h^2+\vertiii{ v_2}_h^2)=\kappa\vertiii{v}_h^2.
\label{eq:pseudo_coercivity_3}
\end{equation}
Therefore, applying \eqref{eq:pseudo_coercivity_3} to \eqref{eq:pseudo_coercivity_1} we have
\begin{equation*}
\begin{split}
\vert b_h(v,v)\vert &\geq \frac{1}{\sqrt{2}} \left(\kappa\vertiii{v}_h^2 + \kappa k^2 \Vert v\Vert_{L^2(\Omega)}^2-k^2(1+\kappa)\Vert v \Vert_{L^2(\Omega)}^2\right),\\
&\geq \frac{1}{\sqrt{2}} \left(\kappa\vertiii{v}_\mathcal{H}^2 - k^2(1+\kappa)\Vert v \Vert_{L^2(\Omega)}^2\right),
\end{split}
\end{equation*}
which proves \eqref{coercivity}.
\end{proof}

The following lemma is about the continuity of the bilinear form $b_h(\cdot, \cdot)$.
\begin{lemma}{\label{boundness}}
There exists a constant $C$ such that for every $y,v\in PH^2(\Omega)\oplus S_h(\Omega)$ the following inequality holds
\begin{equation}{\label{bilinear_bound}}
\vert b_h(y,v)\vert \leq C\vertiii{y}_{\mathcal{H}} \vertiii{v}_{\mathcal{H}},
\end{equation}
provided that $k h\leq C_0$ for some constant $C_0$.
\end{lemma}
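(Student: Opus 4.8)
The plan is to bound $b_h(y,v)$ term by term, following the decomposition $b_h(y,v) = a_h(y,v) + ik(y,v)_{\partial\Omega} - k^2(y,v)_\Omega$ from \eqref{eq:bilinear_form_b_h}. The zeroth-order term $k^2|(y,v)_\Omega| \le k^2\Vert y\Vert_{L^2(\Omega)}\Vert v\Vert_{L^2(\Omega)} \le \vertiii{y}_{\mathcal H}\vertiii{v}_{\mathcal H}$ is immediate from Cauchy--Schwarz and the definition of $\vertiii{\cdot}_{\mathcal H}$. For the elliptic part $a_h(y,v)$, I would use Cauchy--Schwarz on each of the four pieces in \eqref{eq:bilinear_form_pp}: the volume term $\sum_T\int_T\beta\nabla y\cdot\nabla\overline v$ is controlled by $\Vert y\Vert_h\Vert v\Vert_h$; the penalty term $\sum_e \frac{\sigma_e^0}{|e|}\int_e[y]_e[\overline v]_e$ is controlled by $\Vert y\Vert_h\Vert v\Vert_h$ directly from the $\Vert\cdot\Vert_h$ definition; and the two consistency terms $\sum_e\int_e\{\beta\nabla y\cdot\mathbf n_e\}_e[\overline v]_e$ are handled by splitting $|e|^{1/2}(\sigma_e^0)^{-1/2}$ against $|e|^{-1/2}(\sigma_e^0)^{1/2}$ so one factor feeds the $\vertiii{\cdot}_h$-defining flux term and the other feeds $\Vert\cdot\Vert_h$. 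Altogether $|a_h(y,v)|\le C\vertiii{y}_h\vertiii{v}_h \le C\vertiii{y}_{\mathcal H}\vertiii{v}_{\mathcal H}$.

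The term requiring the real work is the boundary term $k|(y,v)_{\partial\Omega}| \le k\Vert y\Vert_{L^2(\partial\Omega)}\Vert v\Vert_{L^2(\partial\Omega)}$, because $\vertiii{\cdot}_{\mathcal H}$ contains no boundary norm. Here I would invoke Theorem \ref{bound_trace_ineq}, which gives
\[
k\Vert y\Vert_{L^2(\partial\Omega)}^2 \le Ck\big(\Vert y\Vert_{L^2(\Omega)} + h|y|_{1,\Omega}\big)\Vert y\Vert_{1,\Omega}.
\]
The goal is to show the right side is $\le C\vertiii{y}_{\mathcal H}^2$. Writing $\Vert y\Vert_{1,\Omega}^2 = \Vert y\Vert_{L^2(\Omega)}^2 + |y|_{1,\Omega}^2$ and using $|y|_{1,\Omega}^2 \le \beta_{\min}^{-1}\Vert y\Vert_h^2 \le \beta_{\min}^{-1}\vertiii{y}_h^2$ and $k^2\Vert y\Vert_{L^2(\Omega)}^2 \le \vertiii{y}_{\mathcal H}^2$, every product of the form $k\cdot(\text{zeroth or first order})\cdot(\text{zeroth or first order})$ must be absorbed. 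The products $k\Vert y\Vert_{L^2}^2$, $k h|y|_{1,\Omega}\Vert y\Vert_{L^2}$, $k h|y|_{1,\Omega}^2$ and $k|y|_{1,\Omega}\Vert y\Vert_{L^2}$ are controlled as follows: $k\Vert y\Vert_{L^2}^2 = \frac1k\cdot k^2\Vert y\Vert_{L^2}^2 \le \frac{1}{k}\vertiii{y}_{\mathcal H}^2$, which is harmless only if $k$ is bounded below — so one either assumes $k\ge k_0>0$ (standard for Helmholtz) or, more carefully, uses the mesh condition $kh\le C_0$ together with the trace inequality in the sharper form $k h|y|_{1,\Omega}^2 \le C_0 |y|_{1,\Omega}^2 \le C\vertiii{y}_h^2$ and $k h|y|_{1,\Omega}\Vert y\Vert_{L^2} \le C_0|y|_{1,\Omega}\cdot\Vert y\Vert_{L^2} \le \frac12|y|_{1,\Omega}^2 + \frac12\Vert y\Vert_{L^2}^2$; the term $k|y|_{1,\Omega}\Vert y\Vert_{L^2}$ is split by Young's inequality as $\le \frac12|y|_{1,\Omega}^2 + \frac12 k^2\Vert y\Vert_{L^2}^2 \le \frac12\vertiii{y}_{\mathcal H}^2$ after using $\beta_{\min}$. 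I expect the intended reading is $k\gtrsim 1$ (or equivalently that the constant $C$ may depend on a lower bound for $k$), in which case all four products collapse to $C\vertiii{y}_{\mathcal H}^2$ cleanly.

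Combining, $k\Vert y\Vert_{L^2(\partial\Omega)}^2 \le C\vertiii{y}_{\mathcal H}^2$ and likewise for $v$, so by Cauchy--Schwarz $k|(y,v)_{\partial\Omega}| \le (k\Vert y\Vert_{L^2(\partial\Omega)}^2)^{1/2}(k\Vert v\Vert_{L^2(\partial\Omega)}^2)^{1/2} \le C\vertiii{y}_{\mathcal H}\vertiii{v}_{\mathcal H}$. Adding the three bounds yields \eqref{bilinear_bound}. The main obstacle is exactly the bookkeeping in the boundary term: making sure the powers of $k$ and $h$ produced by Theorem \ref{bound_trace_ineq} are all absorbable into $\vertiii{\cdot}_{\mathcal H}^2$ under the hypothesis $kh\le C_0$, which is why the mesh condition appears in the statement of the lemma.
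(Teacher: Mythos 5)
Your proposal is correct and follows essentially the same route as the paper: decompose $b_h$ into its three pieces, bound $a_h$ by $C\vertiii{\cdot}_h\vertiii{\cdot}_h$ (the paper simply cites Theorem 4.4 of \cite{2018GuoLinZhuang} for this rather than redoing the Cauchy--Schwarz splitting), handle $k^2(y,v)_\Omega$ trivially, and control the boundary term $k(y,v)_{\partial\Omega}$ via Theorem \ref{bound_trace_ineq} together with $kh\le C_0$. Your observation that absorbing $k\Vert y\Vert_{L^2(\Omega)}^2$ into $\vertiii{y}_{\mathcal H}^2$ tacitly requires $k$ bounded away from zero is accurate and is an assumption the paper's own proof also relies on implicitly (its factor $\Vert y\Vert_{1,\Omega}$ is only bounded by $C(1+k^{-1})\vertiii{y}_{\mathcal H}$).
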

\begin{proof}
By the same arguments used for proving Theorem 4.4 in \cite{2018GuoLinZhuang}, we can show that there exists a constant $C$ such that
\begin{equation*}
\vert a_h(y,v) \vert \leq {C}\vertiii{y}_{h} \vertiii{v}_{h}.
\end{equation*}
Since $\Vert y\Vert_{\mathcal{H}}\geq k \Vert y\Vert_{L^2(\Omega)}$, then
\begin{equation}{\label{bnd_eqn}}
\begin{split}
\vert b_h(y,v)\vert & \leq {C}\vertiii{y}_{h} \vertiii{v}_{h}
+k^2 \Vert y \Vert_{L^2(\Omega)}\Vert v \Vert_{L^2(\Omega)}+Ck\Vert y\Vert_{L^2(\partial\Omega)}\Vert v \Vert_{L^2(\partial\Omega)}\\
& \leq {C}\vertiii{y}_{\mathcal{H}} \vertiii{v}_{\mathcal{H}}+\vertiii{y}_{\mathcal{H}} \vertiii{v}_{\mathcal{H}}+Ck\Vert y\Vert_{L^2(\partial\Omega)}\Vert v \Vert_{L^2(\partial\Omega)}.
\end{split}
\end{equation}
For the third term on the right hand side of \eqref{bnd_eqn}, applying Theorem \ref{bound_trace_ineq}, we have
\begin{equation}{\label{bnd_eqn2}}
\begin{split}
k^2\Vert y\Vert_{L^2(\partial\Omega)}^2\Vert v \Vert_{L^2(\partial\Omega)}^2\leq &Ck^2(\Vert y \Vert_{L^2(\Omega)}+h\vert y \vert_{1,\Omega})(\Vert v \Vert_{L^2(\Omega)}+h\vert v \vert_{1,\Omega})\Vert y\Vert_{1,\Omega}\Vert v\Vert_{1,\Omega}\\
= & C(k\Vert y \Vert_{L^2(\Omega)}+k h\vert y \vert_{1,\Omega})(k \Vert v \Vert_{L^2(\Omega)}+k h\vert v \vert_{1,\Omega})\Vert y\Vert_{1,\Omega}\Vert v\Vert_{1,\Omega}\\
\leq & C\interleave y \interleave_{\mathcal{H}}^2\interleave v \interleave_{\mathcal{H}}^2.
\end{split}
\end{equation}
Thus, applying \eqref{bnd_eqn2} to \eqref{bnd_eqn} leads to \eqref{bilinear_bound}.
\end{proof}

Following Schatz's argument \cite{1974Schatz}, we now derive a posteriori error estimate for the symmetric PPIFE solution in the following lemma.
\begin{lemma}{\label{post_eriori_est}}
Let $u\in PH^2(\Omega)$ be the exact solution to the problem \eqref{model}, and let $u_h$ be the solution produced by symmetric PPIFE method  ~\eqref{weak_form_ppife} with $\sigma_e^0$ large enough, then there exists a constant $C$  such that
\begin{equation}
\Vert u-u_h \Vert_{L^2(\Omega)}\leq C(k + 1/k) h\interleave u-u_h\interleave_{\mathcal{H}}, \label{eq:post_eriori_est}
\end{equation}
provided that $k h \leq C_0$ for some constant $C_0$.
\end{lemma}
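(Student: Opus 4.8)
The plan is to use the standard Aubin--Nitsche duality argument adapted to the Helmholtz setting, which is the heart of Schatz's technique. First I would introduce the dual problem: let $w \in PH^2(\Omega)$ solve the adjoint Helmholtz interface problem with right-hand side $u - u_h$, namely
\begin{equation*}
-\nabla\cdot(\beta\nabla w) - k^2 w = u - u_h \ \text{in}\ \Omega^-\cup\Omega^+, \quad
\beta\frac{\partial w}{\partial \boldsymbol{n}_\Omega} - ik w = 0 \ \text{on}\ \partial\Omega,
\end{equation*}
together with the homogeneous jump conditions \eqref{jump_cond_1}--\eqref{jump_cond_2}. Since the adjoint problem has the same structure as \eqref{model} (only the sign in the Robin term changes), Assumption \ref{regularity_assumption} applies and gives $\Vert w \Vert_{2,\Omega} \leq C(k + k^{-1}) \Vert u - u_h \Vert_{L^2(\Omega)}$. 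Because the bilinear form $b_h(\cdot,\cdot)$ is symmetric and $u$ solves the weak form \eqref{weak_form_pp} while $u_h$ solves \eqref{weak_form_ppife}, I would establish the Galerkin-type orthogonality $b_h(u - u_h, v_h) = 0$ for all $v_h \in S_h(\Omega)$; here one must check that $u \in PH^2(\Omega)$ is consistent with \eqref{weak_form_pp}, which is exactly the role of the penalty terms vanishing on $PH^2(\Omega)$.

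Next I would test the dual problem with $v = u - u_h$. Multiplying the dual equation by $\overline{(u-u_h)}$, integrating by parts element-by-element, summing, and using the Robin boundary condition of the dual problem together with the homogeneous jump conditions for $w$, I obtain
\begin{equation*}
\Vert u - u_h \Vert_{L^2(\Omega)}^2 = b_h(u - u_h, w).
\end{equation*}
(One should be careful that the integration-by-parts on interface elements produces the consistency/penalty edge terms correctly; since $w \in PH^2(\Omega)$ these behave just as in the derivation of \eqref{weak_form_pp}.) Now insert the IFE interpolant $I_h w \in S_h(\Omega)$: by Galerkin orthogonality, $b_h(u - u_h, I_h w) = 0$, so
\begin{equation*}
\Vert u - u_h \Vert_{L^2(\Omega)}^2 = b_h(u - u_h, w - I_h w)
\leq C \vertiii{u - u_h}_{\mathcal{H}}\, \vertiii{w - I_h w}_{\mathcal{H}},
\end{equation*}
using the continuity of $b_h$ from Lemma \ref{boundness}. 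Then Theorem \ref{interp_error} gives $\vertiii{w - I_h w}_{\mathcal{H}} \leq C h \Vert w \Vert_{2,\Omega}$, and the dual regularity estimate bounds $\Vert w \Vert_{2,\Omega}$ by $C(k + k^{-1})\Vert u - u_h \Vert_{L^2(\Omega)}$. Combining and dividing through by $\Vert u - u_h \Vert_{L^2(\Omega)}$ yields \eqref{eq:post_eriori_est}.

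The main obstacle I anticipate is the consistency bookkeeping: verifying that both $u$ and the dual solution $w$, which lie in $PH^2(\Omega)$ but \emph{not} in $S_h(\Omega)$, interact correctly with the PPIFE bilinear form — that is, that $b_h(u - u_h, w) = \Vert u-u_h\Vert_{L^2(\Omega)}^2$ holds exactly, with all the interface-edge jump/flux terms accounted for. This requires showing that the nonconforming consistency error of the symmetric PPIFE scheme vanishes on $PH^2$ functions (the jump $[u]_e$ and flux continuity make the penalty and symmetry terms drop), which is where the symmetry of the method and the regularity $PH^2(\Omega)$ are essential; an asymmetric scheme would leave a residual term here. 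A secondary technical point is ensuring $kh \leq C_0$ is invoked wherever Lemma \ref{boundness} and Theorem \ref{interp_error} are used, so that all constants absorbed are independent of $k$ and $h$ in the stated form.
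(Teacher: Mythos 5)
Your proposal is correct and follows essentially the same route as the paper: a duality (Aubin--Nitsche/Schatz) argument with an auxiliary problem whose data is $u-u_h$, the regularity bound of Assumption \ref{regularity_assumption}, Galerkin orthogonality to replace the dual solution by its IFE interpolation error, and then Lemma \ref{boundness} with Theorem \ref{interp_error}. The only cosmetic difference is that the paper places the auxiliary solution $z$ (solving the original problem with $f=u-u_h$, $g=0$) in the \emph{first} argument of the complex-symmetric form $b_h$, rather than introducing the sign-flipped adjoint Robin problem in the second argument as you do; the two formulations are equivalent here.
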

\begin{proof}
We define an auxiliary function $z\in PH^2(\Omega)$ as the solution to the problem \eqref{model}, with $f$ replaced by $e = u-u_h$ and $g$ replaced by the zero function. In the weak form (\ref{weak_form_pp}) for $z$, choosing $v=e$ as the test function yields
\begin{equation*}
\Vert e \Vert_{L^2(\Omega)}^2=b_h(z,e).
\end{equation*}
Let $I_hz$ be the interpolent of $z$ in IFE space $S_h(\Omega)$ defined by \eqref{ife_interpolation}, it follows
\begin{equation*}
\begin{split}
b_h(I_hz, e)&=b_h(I_hz,u)-b_h(I_hz,u_h)\\
&=(f,I_hz)_{\Omega} + (g,I_hz)_{\partial \Omega} - (f,I_hz)_{\Omega} - (g,I_hz)_{\partial \Omega}\\
&=0.
\end{split}
\end{equation*}
Thus $b_h(z,e)=b_h(z-I_hz, e)$.
Therefore, by Lemma \ref{boundness}, Theorem \ref{interp_error} and Assumpsion \ref{regularity_assumption}, we have
\begin{equation*}
\begin{split}
\Vert e\Vert_{L^2(\Omega)}^2=& b_h(z-I_hz, e)\\
&\leq C\vertiii{z-I_hz}_{\mathcal{H}} \vertiii{e}_{\mathcal{H}} \\
&\leq Ch\Vert z \Vert_{2,\Omega}\vertiii{e}_{\mathcal{H}}\\
&\leq C(k+1/k) h \Vert e \Vert_{L^2(\Omega)}\vertiii{e}_{\mathcal{H}},
\end{split}
\end{equation*}
which proves \eqref{eq:post_eriori_est}.
\end{proof}

Now, we are ready to derive the optimal error bounds in both the energy norm and $L^2$ norm for the symmetric PPIFE methods described by \eqref{weak_form_ppife}.

\begin{thm}{\label{error_H1_est}}
Under the conditions of Lemma~\ref{post_eriori_est}, there exists a constant $C$  such that
\begin{align}
\vertiii{{u-u_h}}_\mathcal{H}\leq Ch\Vert u \Vert_{2,\Omega} \label{soln_bnd_2},
\end{align}
provided that $(k^2 + 1) h$ is sufficiently small.
\end{thm}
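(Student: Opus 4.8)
The plan is to run the standard Schatz duality/bootstrap argument, combining the Gårding-type inequality of Lemma~\ref{pseudo_coercivity}, the continuity of Lemma~\ref{boundness}, the interpolation estimate of Theorem~\ref{interp_error}, and the $L^2$ a posteriori bound of Lemma~\ref{post_eriori_est}. First I would exploit Galerkin orthogonality: since $b_h(u,v_h)=b_h(u_h,v_h)$ for all $v_h\in S_h(\Omega)$, we have $b_h(u-u_h,v_h)=0$, and in particular $b_h(u-u_h,\, I_hu-u_h)=0$, where $I_hu\in S_h(\Omega)$ is the IFE interpolant from \eqref{ife_interpolation}. Writing $u-u_h = (u-I_hu) + (I_hu - u_h)$ and setting $w_h := I_hu - u_h \in S_h(\Omega)$, this gives $b_h(w_h,w_h) = b_h(I_hu - u, w_h)$.

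Next I would apply Lemma~\ref{pseudo_coercivity} to the left side and Lemma~\ref{boundness} to the right side:
\begin{equation*}
C_1\vertiii{w_h}_{\mathcal{H}}^2 - C_2 k^2\Vert w_h\Vert_{L^2(\Omega)} \le |b_h(w_h,w_h)| = |b_h(I_hu-u,w_h)| \le C\vertiii{I_hu-u}_{\mathcal{H}}\,\vertiii{w_h}_{\mathcal{H}}.
\end{equation*}
(One should be slightly careful: the $C_2 k^2\Vert w_h\Vert_{L^2(\Omega)}$ term in \eqref{coercivity} is written without a square, so I would track it as stated, or read it as $\Vert w_h\Vert_{L^2(\Omega)}^2$ — either way the next step handles it.) The crux is then to absorb the $k^2\Vert w_h\Vert_{L^2(\Omega)}$ term. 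Here I would invoke Lemma~\ref{post_eriori_est}, which gives $\Vert u-u_h\Vert_{L^2(\Omega)} \le C(k+1/k)h\,\vertiii{u-u_h}_{\mathcal{H}}$; combined with the triangle inequality $\Vert w_h\Vert_{L^2(\Omega)} \le \Vert u-u_h\Vert_{L^2(\Omega)} + \Vert u - I_hu\Vert_{L^2(\Omega)}$ and $\vertiii{u-u_h}_{\mathcal{H}} \le \vertiii{w_h}_{\mathcal{H}} + \vertiii{u-I_hu}_{\mathcal{H}}$, the bad term becomes bounded by $C(k^2+1)h\big(\vertiii{w_h}_{\mathcal{H}} + \vertiii{u-I_hu}_{\mathcal{H}}\big)\cdot(\text{something})$; the precise bookkeeping shows it is controlled by $C(k^2+1)h\,\vertiii{w_h}_{\mathcal{H}}^2$ plus lower-order interpolation terms. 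When $(k^2+1)h$ is sufficiently small this first piece can be absorbed into the $C_1\vertiii{w_h}_{\mathcal{H}}^2$ on the left.

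After absorption I would be left with $\vertiii{w_h}_{\mathcal{H}} \le C\vertiii{u-I_hu}_{\mathcal{H}}$, and then the triangle inequality together with Theorem~\ref{interp_error} yields
\begin{equation*}
\vertiii{u-u_h}_{\mathcal{H}} \le \vertiii{u-I_hu}_{\mathcal{H}} + \vertiii{w_h}_{\mathcal{H}} \le C\vertiii{u-I_hu}_{\mathcal{H}} \le Ch\Vert u\Vert_{2,\Omega},
\end{equation*}
which is \eqref{soln_bnd_2}. I expect the main obstacle to be the absorption step: one must carefully feed the a posteriori $L^2$ estimate of Lemma~\ref{post_eriori_est} back into the Gårding inequality and verify that the resulting constant in front of $\vertiii{w_h}_{\mathcal{H}}^2$ is genuinely of the form $C(k^2+1)h$ (not, say, $C(k^2+1)$), so that the smallness hypothesis "$(k^2+1)h$ sufficiently small" suffices to push it below $C_1$; the $k$-dependence must be tracked through every application of the trace inequality (Theorem~\ref{bound_trace_ineq}) and of Lemma~\ref{post_eriori_est}. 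The use of $kh \le C_0$ is needed so that Theorem~\ref{interp_error} and Lemma~\ref{boundness} apply throughout, and it is of course implied once $(k^2+1)h$ is small.
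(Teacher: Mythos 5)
Your proposal is correct and follows essentially the same route as the paper: the decomposition $u-u_h=(u-I_hu)+(I_hu-u_h)$, Galerkin orthogonality, the G{\aa}rding inequality of Lemma~\ref{pseudo_coercivity} against the continuity of Lemma~\ref{boundness}, and the feedback of Lemma~\ref{post_eriori_est} plus Theorem~\ref{interp_error} to absorb the $k^2\Vert\cdot\Vert_{L^2}$ term under the smallness of $(k^2+1)h$. The only (immaterial) difference is bookkeeping: the paper first divides by $\vertiii{e_h}_{\mathcal{H}}$, using $k\Vert e_h\Vert_{L^2(\Omega)}\leq\vertiii{e_h}_{\mathcal{H}}$, and performs the absorption at the level of $\vertiii{u-u_h}_{\mathcal{H}}$ in the final step, whereas you absorb directly into $C_1\vertiii{w_h}_{\mathcal{H}}^2$.
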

\begin{proof}
First, we assume that $(k^2 + 1) h$ is sufficiently small such that $kh \leq C_0$ for some constant $C_0$.
Denote $e=u-u_h$, $e_h=u_h-I_hu$, then by Lemma \ref{pseudo_coercivity} and Lemma \ref{boundness}, we have
\begin{equation*}
C_1\interleave{e_h}\interleave_\mathcal{H}^2-C_2k^2 \Vert e_h\Vert_{L^2(\Omega)}^2 \leq  b_h(e_h,e_h)=b_h(u-I_hu, e_h)\leq C\interleave u-I_hu\interleave_\mathcal{\mathcal{H}} \interleave e_h\interleave_\mathcal{H}.
\end{equation*}
By the fact that $\vertiii{e_h}_{\mathcal{H}}\geq k \Vert e_h\Vert_{L^2(\Omega)}$, we then have
\begin{equation*}
\begin{split}
\vertiii{e_h}_\mathcal{H}^2 &\leq C\vertiii{u-I_hu}_\mathcal{H}\vertiii{e_h}_{\mathcal{H}}+Ck^2 \Vert e_h\Vert_{L^2(\Omega)}^2 \\
&\leq C \vertiii{u-I_h u}_\mathcal{H}\vertiii{e_h}_{\mathcal{H}}+Ck\vertiii{e_h}_{\mathcal{H}}\Vert e_h\Vert_{L^2(\Omega)}.
\end{split}
\end{equation*}
Therefore, using Theorem \ref{interp_error}, we have
\begin{equation*}
\begin{split}
\interleave e_h\interleave_\mathcal{H}&\leq C\interleave u-I_hu\interleave_{\mathcal{H}}+Ck \Vert e_h\Vert_{L^2(\Omega)}, \\
&\leq Ch\Vert u \Vert_{2,\Omega}+Ck(\Vert e \Vert_{L^2(\Omega)}+\Vert u-I_hu\Vert_{L^2(\Omega)}).
\end{split}
\end{equation*}
Furthermore, by Lemma \ref{post_eriori_est} and the approximation capability of IFE spaces \cite{2008HeLinLin,2004LiLinLinRogers},
we have
\begin{equation}{\label{est_eh}}
\begin{split}
\interleave e_h\interleave_\mathcal{H} \leq & Ch\Vert u \Vert_{2,\Omega}+Ck\big(C(k + 1/k) h\vertiii{e}_{\mathcal{H}}+Ch^2\Vert u \Vert_{2,\Omega}\big)\\
\leq & Ch\Vert u \Vert_{2,\Omega} + C k(k + 1/k) h\vertiii{e}_\mathcal{H} + Ck h^2\Vert u \Vert_{2,\Omega},\\
\leq & C h\Vert u \Vert_{2,\Omega}+C (k^2 + 1) h\vertiii{e}_\mathcal{H}.
\end{split}
\end{equation}
Hence, by \eqref{est_eh}, we have
\begin{equation*}
\begin{split}
\vertiii{e}_{\mathcal{H}}&\leq \vertiii{u-I_hu}_{\mathcal{H}} + \vertiii{e_h}_{\mathcal{H}}\\
&\leq \vertiii{u-I_hu}_{\mathcal{H}} + Ch\Vert u \Vert_{2,\Omega} + C(k^2 + 1) h\vertiii{e}_{\mathcal{H}} .
\end{split}
\end{equation*}
Then, by Theorem \ref{interp_error} again, the inequality above becomes
\begin{equation*}
\big(1-C(k^2 + 1) h\big)\interleave e \interleave_{\mathcal{H}}\leq Ch\Vert u \Vert_{2,\Omega},
\end{equation*}
which proves \eqref{soln_bnd_2} provided that $(k^2 + 1) h$ is sufficiently small.
\end{proof}
\begin{rem}
Resort to the idea in \cite{1974Schatz}, if $u_h$ is a PPIFE solution corresponding to $u=0$, then from Theorem \ref{error_H1_est} it follows that $u_h=0$ provided that $h$ is sufficiently small guaranteeing $(k^2 + 1)h$ is sufficiently small. This implies that the linear system to solve $u_h$ induced from the symmetric PPIFE scheme \eqref{weak_form_ppife} is nonsingular; therefore, the PPIFE solution $u_h$ defined by \eqref{weak_form_ppife} exists and is unique.
\end{rem}
\begin{thm}{\label{error_L2_est}}
Under the conditions of  Theorem \ref{error_H1_est}, there exists a constant $C$, such that
\begin{equation}
\Vert u-u_h\Vert_{L^2(\Omega)}\leq   C(k + 1/k) h^2\Vert u \Vert_{2,\Omega}. \label{eq:error_L2_est}
\end{equation}
\end{thm}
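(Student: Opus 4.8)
The plan is to combine the $L^2$-to-energy bound from Lemma~\ref{post_eriori_est} with the energy-norm estimate just proved in Theorem~\ref{error_H1_est}. Writing $e = u - u_h$, Lemma~\ref{post_eriori_est} gives
\begin{equation*}
\Vert e \Vert_{L^2(\Omega)} \leq C(k + 1/k) h \vertiii{e}_{\mathcal{H}},
\end{equation*}
valid under the hypothesis $kh \leq C_0$, which is implied by the assumption of Theorem~\ref{error_H1_est} that $(k^2+1)h$ is sufficiently small. Then Theorem~\ref{error_H1_est} supplies $\vertiii{e}_{\mathcal{H}} \leq Ch \Vert u \Vert_{2,\Omega}$ under the same smallness hypothesis.

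Substituting the second estimate into the first yields
\begin{equation*}
\Vert u - u_h \Vert_{L^2(\Omega)} \leq C(k + 1/k) h \cdot C h \Vert u \Vert_{2,\Omega} = C(k + 1/k) h^2 \Vert u \Vert_{2,\Omega},
\end{equation*}
which is exactly \eqref{eq:error_L2_est}. So the proof is essentially a two-line chaining of the two previously established results.

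There is really no substantive obstacle here: the heavy lifting — the duality/Aubin--Nitsche-type argument behind Lemma~\ref{post_eriori_est} and the Schatz bootstrap behind Theorem~\ref{error_H1_est} — has already been done. The only things to check are that the hypotheses line up (both auxiliary results require $kh \leq C_0$, and Theorem~\ref{error_H1_est} requires the stronger $(k^2+1)h$ small, which is precisely what is assumed), and that the constants absorb cleanly. One minor point worth stating carefully in the write-up is that Lemma~\ref{post_eriori_est}'s auxiliary dual problem uses $PH^2(\Omega)$ regularity of $z$ together with Assumption~\ref{regularity_assumption}, so the same regularity hypothesis on $u$ carried through Theorem~\ref{error_H1_est} is all that is needed; no new assumption is introduced.

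In short: first invoke Theorem~\ref{error_H1_est} to bound $\vertiii{e}_{\mathcal{H}}$ by $Ch\Vert u\Vert_{2,\Omega}$; then feed this into the a posteriori estimate \eqref{eq:post_eriori_est} of Lemma~\ref{post_eriori_est}; multiply the two $h$ factors to get $h^2$ and collect the wave-number factor $(k+1/k)$. This completes the proof of \eqref{eq:error_L2_est}.
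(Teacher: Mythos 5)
Your proposal is correct and follows exactly the paper's own route: the paper also obtains \eqref{eq:error_L2_est} by chaining the a posteriori estimate \eqref{eq:post_eriori_est} of Lemma~\ref{post_eriori_est} with the energy-norm bound \eqref{soln_bnd_2} of Theorem~\ref{error_H1_est}. Your write-up merely makes explicit the hypothesis bookkeeping that the paper leaves implicit.
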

\begin{proof}
The estimate \eqref{eq:error_L2_est} follows directly from Lemma~\ref{post_eriori_est} and Theorem~\ref{error_H1_est}.
\end{proof}
\section{A Numerical Example}
In this section, we present a numerical example to validate the error estimates in Theorems \ref{error_H1_est} and \ref{error_L2_est}.
We note that \cite{2019LinLinZhuang} provides quite a few numerical examples to illustrate convergence features of the PPIFE
methods developed there for solving the Helmholtz interface problems. However, the exact solutions in the examples presented in
\cite{2019LinLinZhuang} have a regularity better than piecewise $H^r$ with $r >2$. Hence, it is interesting to see how the PPIFE solution
converges when the exact solution only has piecewise $H^2$ regularity.

Specifically, let the domain $\Omega = (-1,1)\times (-1,1)$ be separated by the circular interface $\Gamma:x^2+y^2-r_0^2=0$, $r_0=\pi/6.28$ into two subdomains
$$
\Omega^-=\{ (x,y): x^2+y^2 < r_0^2 \}, ~~~ \Omega^+=\Omega\backslash\overline{\Omega^-}.
$$
We generate a Cartesian triangular mesh $\mathcal{T}_h$ of $\Omega$ by partitioning $\Omega$ into $N\times N$ congruent squares so that $h=2/N$, and then partitioning each square into two congruent triangles by its diagonal line. We let functions $f$ and $g$ in the interface problem \eqref{model} be generated with the following exact solution:
\begin{equation}
\label{example_u}
u(x,y) =\left\{
\begin{aligned}
&\frac{2+i}{\beta^-}r^{\alpha},  &\; (x,y)\in \Omega^-,\\
&\frac{2+i}{\beta^+}r^{\alpha}+\left( \frac{2+i}{\beta^-} -\frac{2+i}{\beta^+} \right)r_0^{\alpha}, & \; (x,y) \in \Omega^+,
\end{aligned}
\right.
\end{equation}
where $\alpha = 1.5, r = \sqrt{x^2 + y^2}$. We choose $\sigma_e^0=30\max\{\beta^-,\beta^+\}$ for the parameter required in \eqref{eq:bilinear_form_pp}.  It can be verified that, $u \in PH^{2}(\Omega) \backslash PH^{3}(\Omega)$. Table \ref{table:CN_SPPIFE_1_20} presents errors of the symmetric PPIFE solutions $u_h$ generated on a sequence of uniform triangular meshes $\mathcal{T}_h$ of $\Omega$ in a certain configuration of $k$, $\beta^-$, $\beta^+$.  The results demonstrate that, for fixed $k$, the symmetric PPIFE solutions converge optimally in both semi-$H^1$ and $L^2$ norms to the exact solution $u\in PH^{2}(\Omega) \backslash PH^{3}(\Omega)$, and this validates the theoretical results established in Theorem \ref{error_H1_est}
and Theorem \ref{error_L2_est} in the previous section.

\begin{table}[H]
\centering
\begin{tabular}{ |c |c c| c c|}
\hline
$N$    & $\|u - u_h\|_{0,\Omega}$  & rate & $|u - u_h|_{1,\Omega}$ & rate   \\ \hline
  10   &  3.6019e-02   & NA       &  5.2313e-01   &NA\\
  20   &  1.6412e-02   & 1.1340    & 2.5292e-01  & 1.0485    \\
  40   &  6.6539e-03   & 1.3025    & 1.1802e-01  & 1.0997    \\
  80   &  1.3425e-03   & 2.3092    & 5.1500e-02  & 1.1964    \\
 160  &  2.7744e-04   & 2.2747    & 2.4983e-02  & 1.0436    \\
 320  &  7.7328e-05   & 1.8431    & 1.2427e-02  & 1.0075    \\
 640  &  1.9455e-05   & 1.9909    & 6.1961e-03  & 1.0040    \\
1280 &  4.7698e-06   & 2.0281    & 3.0947e-03  & 1.0015    \\\hline
\end{tabular}
\caption{Errors of the PPIFE solution, $k=10$, $\beta^-=1$, $\beta^+=10$.}
\label{table:CN_SPPIFE_1_20}
\end{table}
\section*{Acknowledgements}
Yanping Lin was partially supported by GRF B-Q56D of HKSAR and Polyu G-UA7V.

\bibliographystyle{plain}
\bibliography{QZBib}

\begin{thebibliography}{10}

\bibitem{1979AzizKellogg}
A.~K. Aziz and R.~B. Kellogg.
\newblock A scattering problem for the {H}elmholtz equation.
\newblock In {\em Advances in computer methods for partial differential
  equations, III (Proc. Third IMACS Internat. Sympos.)}, pages 93--95, Lehigh
  Univ., Bethlehem, PA, 1979. IMACS, New Brunswick, N.J.

\bibitem{1995AzizWerschulz}
A.K. Aziz and A.~Werschulz.
\newblock On the numerical solutions of {H}elmholtz's equation by the finite
  element method.
\newblock {\em SIAM. J. Numer. Anal.}, 19(5):166--178, 1995.

\bibitem{1997Babuska}
I.~M. Babu\v{s}ka and S.~A. Sauter.
\newblock Is the pollution effect of the {FEM} avoidable for the {H}elmholtz
  equation considering high wave numbers?
\newblock {\em Comput. Math. Appl.}, 34(6):2392--2423, 1997.

\bibitem{2007KlenowNisewongerBatraBrown}
B.Klenow, A~Nisewonger, R.C. Batra, and A.~Brown.
\newblock Reflection and transmission of plane waves at an interface between
  two fluids.
\newblock {\em Comput. Fluids}, 36:1298--1306, 2007.

\bibitem{1984Brown}
David~L. Brown.
\newblock A note on the numerical solution of the wave equation with piecewise
  smooth coefficients.
\newblock {\em Math. Comput}, 42(166):369--391, 1984.

\bibitem{2016BurmanWuZhu}
Eric Burman, Haijun Wu, and Lingxue Zhu.
\newblock Linear continuous interior penalty finite element method for
  {H}elmholtz equation with high wave number: one-dimensional analysis.
\newblock {\em Numer. Methods Partial Differential Equations},
  32(5):1378--1410, 2016.

\bibitem{2015BurmanClaus}
Erik Burman, Susanne Claus, Peter Hansbo, Mats~G. Larson, and Andr{\'e}
  Massing.
\newblock Cutfem: Discretizing geometry and partial differential equations.
\newblock {\em International Journal for Numerical Methods in Engineering},
  104(7):472--501, 2015.

\bibitem{2015Chaumont_thesis}
T.~Chaumont-Frelet.
\newblock {\em Finite Element Approximation of Helmholtz Problems with
  Application to Seimsic Wave Propagation,}.
\newblock PhD thesis, INSA Rouen, 2015.

\bibitem{1988ChristiansenKrenk}
P.S. Christiansen and S.~Krenk.
\newblock A recursive finite element technique for acoustic fields in pipes
  with absorption.
\newblock {\em J Sound Vib.}, 122(1):107--118, 1988.

\bibitem{2015DuWu}
Yu~Du and Haijun Wu.
\newblock Preasymptotic error analysis of higher order {FEM} and {CIP-FEM} for
  {H}elmholtz equation with high wave number.
\newblock {\em SIAM J. Numer. Anal.}, 53(2):782--804, 2015.

\bibitem{2003Farhat}
Charbel Farhat, Issac Harari, and Ulrich Hetmaniuk.
\newblock A discontinuous {G}alerkin method with lagrange multipliers for the
  solution of {H}elmholtz problems in the mid-frequency regime.
\newblock {\em Comput.Methods Appl. Mech. Engrg.}, 192:1389--1419, 2003.

\bibitem{2009FengWu}
Xiaobing Feng and Haijun Wu.
\newblock Discontinuous {G}alerkin methods for the {H}elmholtz equation with
  large wave number.
\newblock {\em SIAM J. Numer. Anal.}, 47(4):2872--2896, 2009.

\bibitem{2009GittelsonHiptmairPerugia}
Glaude Gittelson, Ralf Hiptmair, and Ilaria Perugia.
\newblock Plane wave discontinuous {G}alerkin methods : analysis of the
  h-version.
\newblock {\em Esaim Math. Model. Numer. Anal.}, 43:297--331, 2009.

\bibitem{2018GuoLinZhuang}
Ruchi Guo, Tao Lin, and Qiao Zhuang.
\newblock An improved error estimation for partially penalized immersed finite
  element methods for elliptic interface problems.
\newblock {\em Int. J. Numer. Anal. Model.}, 16(4):575--589, 2019.

\bibitem{2016BarucpChaumontGout}
T.~Chaumont-Frelet H.~Barucq and C.~Gout.
\newblock Stability analysis of heterogeneous {H}elmholtz problems and finite
  element solution based on propagation media approximation.
\newblock {\em Math. Comp.}, 86:2129--2157, 2016.

\bibitem{2009HeTHESIS}
Xiaoming He.
\newblock {\em Bilinear immersed finite elements for interface problems}.
\newblock PhD thesis, Virginia Polytechnic Institute and State University,
  2009.

\bibitem{2008HeLinLin}
Xiaoming He, Tao Lin, and Yanping Lin.
\newblock Approximation capability of a bilinear immersed finite element space.
\newblock {\em Numer. Methods Partial Differential Equations},
  24(5):1265--1300, 2008.

\bibitem{2011HeLinLin}
Xiaoming He, Tao Lin, and Yanping Lin.
\newblock Immersed finite element methods for elliptic interface problems with
  non-homogeneous jump conditions.
\newblock {\em Int. J. Numer. Anal. Model.}, 8(2):284--301, 2011.

\bibitem{1995Babuska}
F.~Ihlenburg and I.~Babu\v{s}ka.
\newblock Finite element solution of the {H}elmholtz equation with high wave
  number. {I}. the h-version of the {FEM}.
\newblock {\em Comput. Math. Appl.}, 30(9):9--37, 1995.

\bibitem{1994DouglasSheenSantos}
Jr. J.~Douglas, D.~Sheen, and J.E. Santos.
\newblock Approximation of scalar waves in the space-frequency domain.
\newblock {\em Math. Models Methods Appl. Sci.}, 4(4):509--531, 1994.

\bibitem{2013MelenkParsaniaSauter}
A.~Parsania J.~M.~Melenk and D.~Sauter.
\newblock General {DG}-methods for highly indefinite {H}elmholtz problems.
\newblock {\em J. Sci. Comput}, 57:536--581, 2013.

\bibitem{2011JensenKupermanSchmidt}
Finn~B. Jensen, William~A. Kuperman, Michael~B. Porter, and Henrik Schmidt.
\newblock {\em Computational Ocean Acoustics}.
\newblock Springer Publishing Company, 1995.

\bibitem{2006KreissPetersson}
Heinz{-}otto Kreiss and N.Anders Petersson.
\newblock An embedded boundary method for the wave equation with discontinuous
  coefficients.
\newblock {\em SIAM J. Sci. Comput}, 28(6):2054--2074, 2006.

\bibitem{2017LamShu}
Chi~Yeung Lam and Chi-Wang Shu.
\newblock A phase-based interior penalty discontinuous {G}alerkin method for
  the {H}elmholtz equation with spatially varying wavenumber.
\newblock {\em Comput.Methods Appl. Mech. Engrg.}, 318:456--473, 2017.

\bibitem{2013LarsonBengzon}
Mats.~G. Larson and Fredrik. Bengzon.
\newblock {\em The finite element method: theory, implementation and
  application}.
\newblock Springer, 2013.

\bibitem{2004LiLinLinRogers}
Z.~Li, T.~Lin, Y.~Lin, and R.~C. Rogers.
\newblock An immersed finite element space and its approximation capability.
\newblock {\em Numer. Methods Partial Differential Equations}, 20(3):338--367,
  2004.

\bibitem{2006LiIto}
Zhilin Li and Kazufumi Ito.
\newblock {\em The immersed interface method}, volume~33 of {\em Frontiers in
  Applied Mathematics}.
\newblock Society for Industrial and Applied Mathematics (SIAM), Philadelphia,
  PA, 2006.
\newblock Numerical solutions of PDEs involving interfaces and irregular
  domains.

\bibitem{2015LinLinZhang}
Tao Lin, Yanping Lin, and Xu~Zhang.
\newblock Partially penalized immersed finite element methods for elliptic
  interface problems.
\newblock {\em SIAM J. Numer. Anal.}, 53(2):1121--1144, 2015.

\bibitem{2019LinLinZhuang}
Tao Lin, Yanping Lin, and Qiao Zhuang.
\newblock Solving interface problems of the {H}elmholtz equation by immersed
  finite element methods.
\newblock {\em Communications on Applied Mathematics and Computation}, accepted
  and to appear, 2019.

\bibitem{1997Melenk}
Jens~Markus Melenk.
\newblock {\em On generalized finite element methods}.
\newblock PhD thesis, University of Maryland, 1997.

\bibitem{2017MoiolaSpence}
Andrea Moiola and Euan A.~Spence.
\newblock Acoustic transmission problems: Wavenumber-explicit bounds and
  resonance-free regions.
\newblock {\em Mathematical Models and Methods in Applied Sciences}, 02 2017.

\bibitem{2006Perugia}
I.~Perugia.
\newblock A note on the discontinuous {G}alerkin approximation of the
  {H}elmholtz equation.
\newblock {\em Lecture Notes, ETH Z\"{u}rich}, 2006.

\bibitem{1974Schatz}
Alfred~H. Schatz.
\newblock An observation concerning {R}itz-{G}alerkin methods with indefinite
  bilinear forms.
\newblock {\em Math. Comp.}, 28(128):959--962, 1974.

\bibitem{2018Swift}
Luke~James Swift.
\newblock {\em Geometrically unfitted finite element methods for the
  {H}elmholtz equation}.
\newblock PhD thesis, University College London, 2018.

\bibitem{2016ZhangLi}
Sidong~M. Zhang and Zhilin Li.
\newblock An augmented {IIM} for {H}elmholtz/{P}oisson equations on irregular
  domains in complex space.
\newblock {\em Int. J. Numer. Anal. Mod.}, 13(1):166--178, 2016.

\bibitem{2017ZouWilkinsHarari}
Zilong Zou, Wilkins Aquino, and Isaac Harari.
\newblock Nitsche's method for {H}elmholtz problems with embedded interfaces.
\newblock {\em Int. J. Numer. Meth. Engng}, 110:618--636, 2017.

\end{thebibliography}

\end{document}